\theoremstyle{plain}
\newtheorem{theorem}{Theorem}[section]
\newtheorem{proposition}[theorem]{Proposition}
\newtheorem{lemma}[theorem]{Lemma}
\newtheorem{corollary}[theorem]{Corollary}
\numberwithin{equation}{subsection}
\numberwithin{theorem}{subsection}
\theoremstyle{definition}
\newtheorem{definition}[theorem]{Definition}
\newtheorem*{notation}{\normalfont \textit{{\large Notation}}}
\theoremstyle{remark}
            \newenvironment{claim}[1]{\par\noindent\underline{Claim:}\space#1}{}
      \newenvironment{claimproof}[1]{\par\noindent\underline{Proof:}\space#1}{\hfill $\blacksquare$}
      	  \newcommand{\Poincare}{Poincar\'e\xspace}
	  \newcommand{\set}[1]{\left\lbrace #1\right\rbrace}
	  \newcommand{\sm}{\smallskip}
      \newcommand{\D}{\mathbb{D}}
      \def\@setcopyright{}
      \def\serieslogo@{}
	  \subjclass[2010]{Primary 53D35, 57R17; Secondary 20F05}
\begin{document}

   \author{Bahar Acu}
   \address{Department of Mathematics, University of Southern California, Los Angeles, CA 90089}
   \email{bacu@usc.edu}
   \urladdr{http://www-scf.usc.edu/\char126 bacu/}
   
   \thanks{The first author acknowledges support from the U.S. National Science Foundation. She was partially supported by the NSF grant of Ko Honda (DMS-1406564).}

\author{Russell Avdek}
\email{russell.avdek@gmail.com}

   \title[Symplectic Mapping Class Group Relations]{Symplectic Mapping Class Group Relations Generalizing the Chain Relation}

   \begin{abstract}
   In this paper, we examine mapping class group relations of some symplectic manifolds. For each $n\geq 1$ and $k \geq 1$, we show that the $2n$-dimensional Weinstein domain $W = \{f=\delta\} \cap B^{2n+2}$, determined by the degree $k$ homogeneous polynomial $f\in \mathbb{C}[z_0,\dots,z_n]$, has a Boothby-Wang type boundary and a right-handed fibered Dehn twist along the boundary that is symplectically isotopic to a product of right-handed Dehn twists along Lagrangian spheres. We also present explicit descriptions of the symplectomorphisms in the case $n=2$ recovering the classical chain relation for the torus with two boundary components.
   \end{abstract}

	  \keywords{fibered Dehn twist, Boothby-Wang boundary, fractional twist, open book, Lefschetz fiber, Milnor fiber, mapping class group relations}
	     % \date{This version: \today}
      \maketitle
      %\tableofcontents

      \section{Introduction}\label{intro}

About fifteen years ago, Giroux \cite{G} described a correspondence between $(2n+1)$-dimensional contact manifolds and symplectomorphisms of $2n$-dimensional exact symplectic manifolds via a generalization of a construction due to Thurston-Winkelnkemper \cite{TW}. When $n=1$, $\pi_0$ of the symplectomorphism group of a surface is equal to the mapping class group and hence can be studied entirely in terms of Dehn twists along simple closed curves.  Considerable progress has been made in understanding so-called ``Giroux correspondence'' for contact $3$-manifolds; for example, Wand \cite{AW} recently gave a characterization of tightness in terms of the supporting open book decomposition. 
\smallskip

Symplectomorphisms of symplectic manifolds of dimension greater than 2 are comparatively not as well understood, although there has been substantial progress. It had been known that one can construct symplectic Dehn twists ``along'' Lagrangian spheres \cite{A}.  Starting with \cite{P}, Seidel and Khovanov-Seidel systematically studied symplectic Dehn twists in \cite{KS}, \cite{PST}, \cite{P2}, \cite{PS}, and also Wu recently described a classification of Lagrangian spheres in $A_n$-surface singularities in dimension $4$ \cite{WWW}. However, there exist exact symplectic manifolds which do not contain Lagrangian spheres (e.g., the $2n$-dimensional disk). For such manifolds, there is no general way of constructing symplectomorphisms that are not symplectically isotopic to the identity.
\smallskip

Biran and Giroux \cite{BiG} introduced another class of symplectomorphisms, namely the \emph{fibered Dehn twist} which turns out to be a strong tool to study symplectic manifolds that do not contain Lagrangian spheres. More precisely, let $(W, \omega)$ be a symplectic manifold with contact type boundary $(M, \xi=\operatorname{ker}(\lambda))$ carrying a free circle action on $M \times [0, 1]$.  Assume also that the action preserves the contact form $\lambda$. The contact boundary $M$ is then called of \textit{Boothby-Wang type}. A \emph{fibered Dehn twist} along the neighborhood $M \times [0, 1]$ is a boundary-preserving symplectomorphism $\tau$ defined as follows: 
\begin{align*}
\tau: M \times [0,1] &\longrightarrow M \times [0,1], \\
(z,t) &\longmapsto (z\cdot[s(t)\hspace{0.1cm}\mbox{mod}\hspace{0.05cm} 2\pi],t), 
\end{align*}
where $s:[0,1] \rightarrow \mathbb{R}$ is a smooth function such that $s(t)=0$ near $t=1$ and $s(t)=2\pi$ near $t=0$. This will be explained in detail in Section \ref{boothby_wang}. Many such fibered Dehn twists have been shown not to be symplectically isotopic to the identity by a theorem of Biran and Giroux \cite{BiG,CDK}. However in some cases, such as in our main theorem, one can relate fibered Dehn twists with symplectic Dehn twists.
\smallskip

The main goal of this paper is to relate fibered Dehn twists and (regular) symplectic Dehn twists for certain classes of symplectic manifolds. We prove the following theorems which shows that in certain cases, fibered Dehn twists can be expressed as a product of right-handed Dehn twists.\smallskip

Throughout this paper, $B^{2n+2}_{\epsilon}$ denotes a $(2n+2)$-dimensional ball of radius $\epsilon>0$ in $\mathbb{C}^n$ centered at the origin.

   \begin{theorem}\label{main_theorem}
Let $f\in \mathbb{C}[z_0,\dots,z_n]$ be a homogeneous polynomial of degree k with an isolated singularity at 0 for $n \geq 1$, $k\geq1$. Denote by $(W,d\beta)$ the $2n$-dimensional Weinstein domain, where for $\epsilon \geq 0$ small and $\delta=\delta(\epsilon)>0$ small
\begin{center}
$W = \{f(z_0,\dots,z_n)=\delta\} \cap B^{2n+2}_{\epsilon} \hspace{.6cm} \mbox{and} \hspace{.6cm} \beta = \dfrac{1}{2} \displaystyle\sum_{j=0}^{n} (x_jdy_j-y_jdx_j)$.
\end{center}
Then $W$ has Boothby-Wang type boundary and $\partial W$ has a coherent open book decomposition $OB(F, \Phi_{\partial})$ such that a right-handed fibered Dehn twist $\Phi_{\partial} \in \mbox{Symp}(F, d\beta, \partial F)$ is boundary-relative symplectically isotopic to a product of $k(k-1)^{n}$ right-handed Dehn twists $\Phi_{1}\circ \dots \circ \Phi_{k(k-1)^{n}}$ along Lagrangian spheres. Here $F=W \cap \{z_n=\nu\}$ is a degree $k$ hypersurface in $W$ for $\nu >0$ small.
   \end{theorem}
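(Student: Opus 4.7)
The plan is to realize $W$ as the total space of a Lefschetz fibration, read off the induced open book on $\partial W$, and identify the resulting product of vanishing-cycle Dehn twists with the fibered Dehn twist coming from the Boothby--Wang structure. The Boothby--Wang structure arises directly from the homogeneity of $f$: the Hopf $S^1$-action on $\mathbb C^{n+1}$ multiplies $f$ by $e^{ik\theta}$, and after correcting by a suitable rotation of the phase of $\delta$ one obtains a free $S^1$-action on $\partial W$ preserving the Liouville form $\beta|_{\partial W}$, in the framework set up in Section \ref{boothby_wang}.

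For the Lefschetz fibration I take $\pi(z)=z_n$, possibly after a small generic perturbation of $f$ to put the critical points of $\pi|_W$ in general position. The regular fiber over small $\nu$ is $F=W\cap\{z_n=\nu\}$, itself a Weinstein domain as the Milnor fiber of the degree-$k$ polynomial $(z_0,\ldots,z_{n-1})\mapsto f(z_0,\ldots,z_{n-1},\nu)-\delta$ in $n$ variables. To count the Lefschetz critical points $\mu$, I use the Euler-characteristic relation $\chi(W)=\chi(F)+(-1)^n\mu$ coming from attaching $\mu$ $n$-handles along the vanishing cycles. Plugging in the standard Milnor-number Euler characteristics $\chi(W)=1+(-1)^n(k-1)^{n+1}$ and $\chi(F)=1+(-1)^{n-1}(k-1)^n$ gives
\[
\mu \;=\; (k-1)^{n+1}+(k-1)^n \;=\; k(k-1)^n,
\]
as required, and each critical point contributes a Lagrangian vanishing $S^{n-1}\subset F$ together with a right-handed Dehn twist $\Phi_i$ along it.

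The Lefschetz fibration then induces an open book decomposition of $\partial W$ with page $F$ (via corner-rounding along the horizontal boundary $\pi^{-1}(D^2)\cap\partial B_\epsilon$) and monodromy the ordered product $\Phi_1\circ\cdots\circ\Phi_{k(k-1)^n}$, by the standard Lefschetz-to-open-book correspondence. The final step, which I expect to be the main obstacle, is the identification of this monodromy with the fibered Dehn twist $\Phi_\partial$: smoothly the two open books are conjugate because both are parametrized by $\arg(z_n)$, but the statement demands a symplectic isotopy rel $\partial F$ inside $\mathrm{Symp}(F,d\beta,\partial F)$. I would carry this out by producing a one-parameter family of compatible Weinstein/Lefschetz structures interpolating the ``Lefschetz picture'' and the ``Boothby--Wang fibered-twist picture,'' and then invoking uniqueness of supporting open books up to positive stabilization, together with a careful symplectic cutoff near the binding to ensure the isotopy fixes $\partial F$ pointwise.
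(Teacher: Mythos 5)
Your overall architecture matches the paper's: compare the open book induced on $\partial W$ by the Lefschetz fibration $z_n|_W$ (monodromy a product of Dehn twists along vanishing cycles) with the open book coming from the Boothby--Wang structure on $\partial W$ supplied by the circle action from homogeneity (monodromy a fibered Dehn twist, via the Chiang--Ding--van Koert Theorem \ref{theorem 3.1}). Your count of critical points is a genuinely different and cleaner route than the paper's: you get $\mu = k(k-1)^n$ from $\chi(W)=\chi(F)+(-1)^n\mu$ and the Milnor-number formulas, whereas the paper Morsifies the Fermat polynomial explicitly to count $(k-1)^n$ critical values per sheet, and then transfers the count to a general homogeneous $f$ by connecting $f$ to the Fermat polynomial through polynomials with isolated singularity and invoking a trivial-cobordism/Gray-stability argument (Section \ref{number}). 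Your Euler-characteristic argument sidesteps that entire deformation section, at the cost of needing genericity of the hyperplane section $\{z_n=\nu\}$ so that $\chi(F)=1+(-1)^{n-1}(k-1)^n$; that is a fair trade and worth making explicit.

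The genuine gap is the final step, which you correctly flag as the main obstacle but then resolve with the wrong tool. ``Uniqueness of supporting open books up to positive stabilization'' is (a) not an available theorem in dimension $2n+1>3$, and (b) even where available it would only produce a common positive stabilization of the two open books, not a symplectic isotopy in $\mathrm{Symp}(F,d\beta,\partial F)$ between the two specific monodromies on the \emph{same} page --- which is exactly what the theorem asserts. The paper's mechanism is concrete: both open books have literally the same binding $B=\partial W\cap\{z_n=0\}$ and fibration by $\arg(z_n)$, and the Lefschetz page $\Sigma_0=W\cap\{z_n=\nu\}$ is carried to the Boothby--Wang page $\Sigma_1=(\partial W\setminus\{z_n=0\})\cap\{\arg(z_n)=\theta_0\}$ by the flow of a rescaled radial Liouville vector field $gX$ in $\mathbb{C}^{n+1}$, fixed along the common boundary. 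One then checks that every intermediate hypersurface $\Sigma_t$ is symplectic for $d\beta$ (via transversality to $X$ and to the angular vector field $Y=\partial_\theta$ in the $z_n$-plane, which forces $(d\beta)^n|_{\Sigma_t}>0$), so the family $(\Sigma_t,d\beta|_{\Sigma_t})$ is a symplectic isotopy of pages rel boundary, and this is what conjugates the product of Dehn twists into the fibered Dehn twist. Replace your stabilization argument with an explicit interpolation of this kind; without it the identification of the two monodromies is not established.
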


It turns out that when we restrict ourselves to a certain class of homogeneous polynomials of degree $k$, we obtain a beautiful relation between the fibered Dehn twist $\Phi_{\partial}$ and symplectic Dehn twist $\Phi$.  We obtain the following immediate corollary of Theorem \ref{main_theorem}.
  
 \begin{corollary}[Roots of fibered Dehn twists]\label{main_corollary} 
  Let $f(z_0,\dots, z_n)=z_0^k+ \dots+ z_n^k \in  \mathbb{C}[z_0,\dots,z_n]$ for $n \geq 1$, $k\geq1$. Denote by $(W,d\beta)$ the $2n$-dimensional Weinstein domain, where for $\epsilon \geq 0$ small and $\delta=\delta(\epsilon)>0$ small
\begin{center}
$W= \{f(z)=\displaystyle\sum_{j=0}^{n}z_{j}^{k} = \delta \}\cap B^{2n+2}_{\epsilon} \hspace{.6cm} \mbox{and} \hspace{.6cm} \beta = \frac{1}{2} \displaystyle\sum_{j=0}^{n} (x_jdy_j-y_jdx_j)$. 
\end{center}
Then $W$ has Boothby-Wang type boundary and $\partial W$ has a coherent open book decomposition $OB(F, \Phi_{\partial})$ such that a right-handed fibered Dehn twist $\Phi_{\partial} \in \mbox{Symp}(F, d\beta, \partial F)$ is boundary-relative symplectically isotopic to $\Phi^{k}$, where $\Phi$ is a product of $(k-1)^n$ right-handed Dehn twists along Lagrangian spheres. That is, $\Phi$ is the $k$-th root of $\Phi_{\partial}$.
   \end{corollary}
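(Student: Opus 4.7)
The plan is to derive this corollary from Theorem~\ref{main_theorem} by exploiting the $\mathbb{Z}/k$-symmetry
\[ \sigma: (z_0, \ldots, z_{n-1}, z_n) \longmapsto (z_0, \ldots, z_{n-1}, \zeta z_n), \quad \zeta := e^{2\pi i/k}, \]
which satisfies $f \circ \sigma = f$ and hence restricts to a symplectic automorphism of $(W, d\beta)$ of order $k$. The starting point is to observe that the construction in the proof of Theorem~\ref{main_theorem}, specialized to the Fermat polynomial, produces the $k(k-1)^n$ Lagrangian spheres as vanishing cycles of the Lefschetz fibration $\pi : W \to D$ given by projection to $z_n$. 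For the Fermat polynomial, $\pi$ has precisely $k$ critical values at the $k$-th roots of $\delta$, these form a single orbit of the induced rotation $w \mapsto \zeta w$ on $D$, and each critical value contributes $(k-1)^n$ vanishing cycles since the smooth fiber is itself a Milnor fiber of $\sum_{j=0}^{n-1} z_j^k$.

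The key idea is to take as reference page the central fiber $F_0 := \pi^{-1}(0)$, which is $\sigma$-invariant. Because $\sigma$ fixes the coordinates $z_0, \ldots, z_{n-1}$ and because $z_n = 0$ on $F_0$, the restriction $\sigma|_{F_0}$ is the identity. Choose a straight radial vanishing thimble $\alpha_0$ from $0$ to the critical value $c_0 = \delta^{1/k}$, together with its cyclic rotates $\alpha_j := \sigma^j(\alpha_0)$ running from $0$ to $c_j = \zeta^j \delta^{1/k}$. The $\sigma$-equivariance of the symplectic parallel transport and of the vanishing spheres at the critical values then forces the induced vanishing cycles $V_i^{(j)} \subset F_0$ to satisfy
\[ V_i^{(j)} = \sigma^j\bigl(V_i^{(0)}\bigr) = V_i^{(0)}, \]
where the second equality uses $\sigma|_{F_0} = \mathrm{id}$. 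Equality holds as subsets of $F_0$, not merely up to isotopy. Thus each local monodromy $T_j$ around $c_j$, a product of $(k-1)^n$ Dehn twists along the cycles $V_i^{(j)}$, coincides with $T_0$ verbatim, and the standard decomposition of $\Phi_\partial$ into local monodromies yields
\[ \Phi_\partial \;\simeq\; T_0 \circ T_1 \circ \cdots \circ T_{k-1} \;=\; T_0^{\,k}. \]
Setting $\Phi := T_0$ then produces the required product of $(k-1)^n$ right-handed Dehn twists with $\Phi_\partial \simeq \Phi^k$.

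The main technical point, and the sole step requiring care beyond the symmetry observation, is transferring this identity from $F_0$ to the page $F = F_\nu$ used in the coherent open book $OB(F, \Phi_\partial)$. The parallel transport identification $F_0 \simeq F_\nu$ along the segment $[0, \nu]$ is a symplectomorphism, and one must verify that conjugating the factorization above by this identification gives a \emph{boundary-relative} symplectic isotopy of $\Phi_\partial$ on $F_\nu$. This should follow directly from the deformation arguments used to set up the Lefschetz fibration and the coherent open book in the proof of Theorem~\ref{main_theorem}, since the entire construction -- fibration, $\sigma$-action, vanishing thimbles, and symplectic connection -- behaves continuously under rescaling of $\nu$ toward $0$.
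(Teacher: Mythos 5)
Your argument is correct and reaches the same conclusion, but it travels a genuinely different route from the paper's. The paper's proof never invokes parallel transport or vanishing cycles: it writes the fiber over $\epsilon e^{i\theta}$ as $\{z_0^k+\cdots+z_{n-1}^k=\delta-\epsilon^k e^{ik\theta}\}$, observes that this family depends on $\theta$ only through $e^{ik\theta}$, and concludes that the boundary monodromy is pulled back from the degree-$k$ self-cover of the circle and is therefore the $k$-th power of the monodromy $\Phi$ of a single traversal; $\Phi$ is then identified with the Milnor monodromy of $\sum_{j=0}^{n-1}z_j^k$ and factored into $(k-1)^n$ Dehn twists by Morsifying and counting the critical points of the perturbed polynomial. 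Your version instead decomposes $\Phi_\partial$ as a product of $k$ local monodromies around the critical values $\zeta^j\delta^{1/k}$ and uses $\sigma$-equivariance of the symplectic connection, together with $\sigma|_{F_0}=\mathrm{id}$, to show those local monodromies literally coincide. Both arguments exploit the same underlying symmetry $z_n\mapsto\zeta z_n$, but yours makes the Picard--Lefschetz structure explicit (which is what one would need to generalize to other $\mathbb{Z}/k$-symmetric situations), while the paper's is shorter because it bypasses vanishing cycles entirely. One point you should make explicit: for $k\ge 3$ and $n\ge 2$ the critical points of $z_n|_W$ are degenerate, so the claim that each $T_j$ is a product of $(k-1)^n$ Dehn twists along cycles $V_i^{(j)}$ presupposes a Morsification, and the verbatim equality $V_i^{(j)}=V_i^{(0)}$ requires that perturbation to be chosen $\sigma$-equivariantly (for instance, a perturbation depending only on $z_0,\dots,z_{n-1}$); alternatively, prove $T_j=T_0$ for the unperturbed local monodromies first and only afterwards factor $T_0$ into Dehn twists.
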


In Section \ref{Corollary3.1}, we prove the following corollary in the case $n=2$, $k=3$; this recovers the classical chain relation for a torus with two boundary components by using the star relation \cite{SG} which will be explained in Section \ref{MCG}, the last section of the paper. Similar representations have been studied in terms of Artin groups by Matsumoto \cite{Mat}.

\begin{corollary}\label{mapping_class_relationn}
Consider the genus 1 surface with 3 boundary components $S_{1,3}$ equipped with the embedded curves $\alpha_{b}, \alpha_{g}, \alpha_{p}, \alpha_{r},b_{1},b_{2}$, and $b_{3}$ as depicted in Figure \ref{mapping_class_relation}. Let  $D_{\alpha_i}$ and $D_{b_j}$ represent Dehn twists along the curves $\alpha_i$ and $b_j$, respectively, where $i \in \{b, g, p, r\}$ and $j \in \{1, 2, 3\}$. Then
\begin{equation*}
(D_{\alpha_{r}}\circ D_{\alpha_{p}} \circ D_{\alpha_{b}} \circ D_{\alpha_{g}} )^{3} = D_{b_{1}}\circ D_{b_{2}} \circ D_{b_{3}}
\end{equation*}
in the mapping class group of $S_{1,3}$. 
\end{corollary}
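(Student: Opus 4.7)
The plan is to specialize Corollary \ref{main_corollary} to $n=2$, $k=3$ and then translate the resulting symplectic statement into a mapping class group relation on an explicit surface. For these parameters the page of the open book is
\[
F \;=\; W\cap\{z_2=\nu\} \;=\; \{z_0^{3}+z_1^{3}=\delta-\nu^{3}\}\cap B^{4}_{\epsilon},
\]
a Milnor fiber of the Brieskorn plane-curve singularity $z_0^{3}+z_1^{3}$. Its Milnor number is $(k-1)^{2}=4$ and its boundary is the $(3,3)$-torus link, which has $\gcd(3,3)=3$ components; hence $F$ is a genus one surface with three boundary components, that is, $F\cong S_{1,3}$. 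In dimension two the Boothby-Wang condition forces each boundary component of $F$ to be a circle with a free $S^{1}$-action preserving the contact form, and the fibered Dehn twist $\Phi_{\partial}$ defined in the introduction then acts as a full positive rotation on a collar of each such boundary circle. Consequently $\Phi_{\partial}$ is isotopic in $\mathrm{Symp}(F,d\beta,\partial F)$ to $D_{b_1}\circ D_{b_2}\circ D_{b_3}$.

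By Corollary \ref{main_corollary}, $\Phi_{\partial}$ is symplectically isotopic, relative to $\partial F$, to $\Phi^{3}$, where $\Phi$ is a product of $(k-1)^{n}=4$ right-handed Dehn twists along Lagrangian spheres in $F$. The remaining geometric step is to identify these four spheres, up to isotopy of simple closed curves on $S_{1,3}$, with the curves $\alpha_r,\alpha_p,\alpha_b,\alpha_g$ drawn in Figure \ref{mapping_class_relation}. I would carry this out by Morsifying $f$ to obtain a genuine Lefschetz fibration on $W$ whose restriction to $F$ has $(k-1)^{2}=4$ critical points, drawing the associated matching arcs in the base, and then reading off the corresponding vanishing cycles on the page $F\cong S_{1,3}$. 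Passing the resulting symplectic isotopy to $\pi_0$ gives the claimed equality
\[
(D_{\alpha_r}\circ D_{\alpha_p}\circ D_{\alpha_b}\circ D_{\alpha_g})^{3} \;=\; D_{b_1}\circ D_{b_2}\circ D_{b_3}
\]
in $\mathrm{MCG}(S_{1,3})$.

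The main obstacle is the explicit matching of the abstract vanishing cycles produced by Corollary \ref{main_corollary} with the specific curves $\alpha_r,\alpha_p,\alpha_b,\alpha_g$ of the figure; everything else in the argument is either definitional or follows directly from Corollary \ref{main_corollary}. Once carried out, the resulting equation is precisely Gervais's star relation \cite{SG} on $S_{1,3}$, and the classical chain relation on the torus with two boundary components advertised in the introduction is then recovered by capping off one of the three boundary components of $S_{1,3}$ with a disk, which kills the corresponding boundary-parallel Dehn twist and collapses one of the four star curves.
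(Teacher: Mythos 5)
Your overall strategy is the same as the paper's: specialize Corollary \ref{main_corollary} to $n=2$, $k=3$, identify the page with $S_{1,3}$ (your genus and boundary-component count via the $(3,3)$-torus link are correct), observe that in dimension $2$ the right-handed fibered Dehn twist along the boundary is a product of Dehn twists about boundary-parallel curves, and conclude that $D_{b_1}\circ D_{b_2}\circ D_{b_3}$ equals the cube of a product of four right-handed Dehn twists along vanishing cycles. Up to that point the argument checks out.

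The genuine gap is the step you flag and then defer: identifying those four vanishing cycles, with the correct ordering in the word, with the specific curves $\alpha_b,\alpha_g,\alpha_p,\alpha_r$ of Figure \ref{mapping_class_relation}. This is not a routine verification; it is the entire content of the corollary beyond Corollary \ref{main_corollary}, since without it you only know that the cube of \emph{some} product of four right-handed Dehn twists on $S_{1,3}$ equals $D_{b_1}\circ D_{b_2}\circ D_{b_3}$. The paper closes this gap in two concrete steps: it first quotes the known monodromy factorization of the Milnor fiber $F_{3,3}$ of $z_0^3+z_1^3$ \cite[Proposition 3.2]{HKP}, which gives explicit vanishing cycles $\alpha_{i,j}$ in the model surface of Figure \ref{n_1_surface}; it then constructs an explicit diffeomorphism $\Psi$ from that model to the surface of Figure \ref{mapping_class_relation} by cutting along four arcs $a_g,a_b,a_p,a_r$ to obtain a $16$-gon (Figure \ref{16-gon}) and regluing, and it pins down the images $\Psi(\alpha_{i,j})$ by checking geometric intersection numbers with the cut arcs. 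If you instead intend to recompute the vanishing cycles by Morsifying $z_0^3+z_1^3$ and tracing matching paths, you must actually exhibit the four cycles on the page and match them, in order, to the figure; the ordering of the twists is exactly where such an argument can silently fail.
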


\begin{figure}[h]
\vspace{3mm}
\begin{overpic}[scale=.7,tics=10]{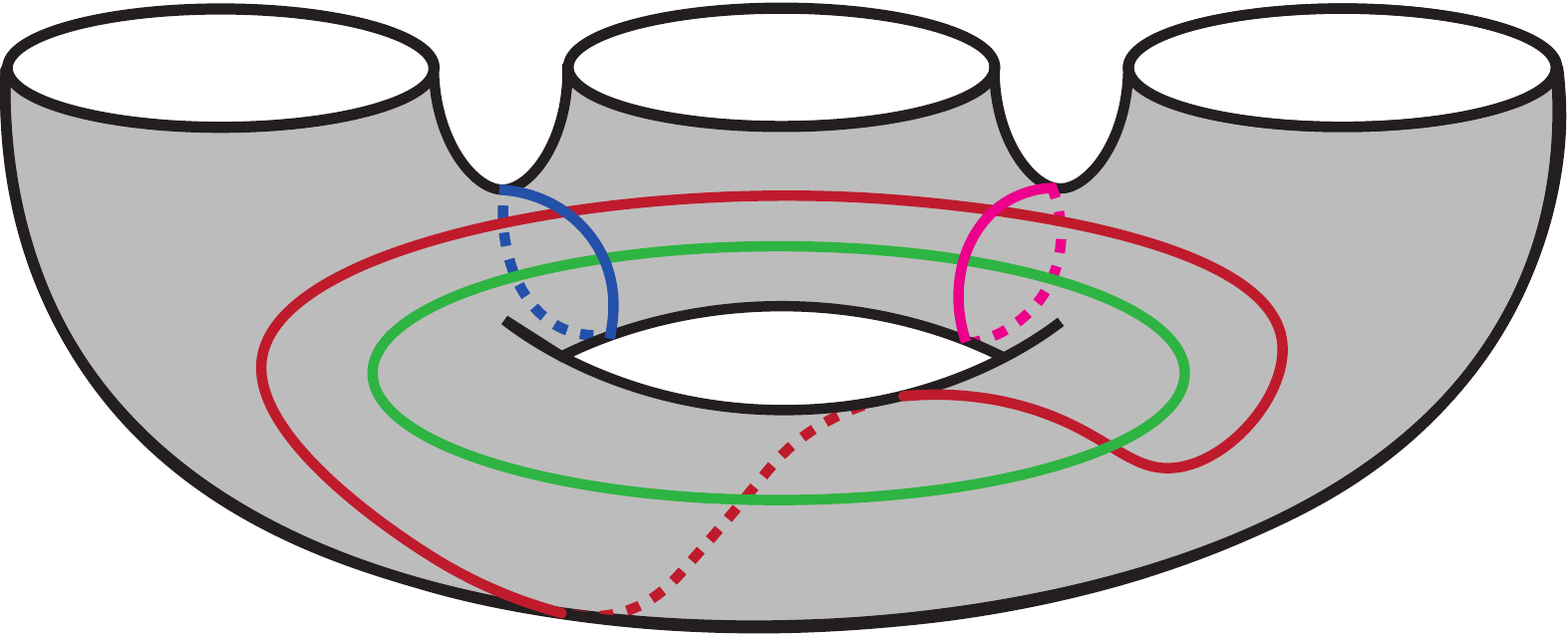}
\put(35.5,28.5){$\alpha_b$}
\put(61,29){$\alpha_p$}
\put(35,7){$\alpha_g$}
\put(13,14){$\alpha_r$}
\put(12,41.5){$b_1$}
\put(49,41.5){$b_2$}
\put(85,41.5){$b_3$}
\end{overpic}
	\caption{We label the blue, green, purple, and red curves as $\alpha_{b}$, $\alpha_{g}$, $\alpha_{p}$, and $\alpha_{r}$, respectively.  The boundary components are labeled as $b_{1}$, $b_{2}$, and $b_{3}$.}
    \label{mapping_class_relation}
\end{figure}

Finally, we will examine contact manifolds given by a weighted homogeneous polynomial. Let $f\in \mathbb{C}[z_1,\dots,z_n]$ be a weighted homogeneous polynomial with an isolated singularity at 0, $n\geq 1$,  and $L(f)=\{f(z_1, \dots, z_n)=0\} \cap S^{2n-1}$ be the corresponding link of singularity. Then the Milnor fibration theorem \cite[Theorem 4.8]{M} states that 
\begin{align*}
\phi: S^{2n-1} - L(f) &\rightarrow S^1, \\
z &\mapsto \dfrac{f(z)}{|f(z)|}
\end{align*}
is a fibration, called the \textit{Milnor fibration} of $f$. The closure of each fiber is a compact manifold with boundary called the \textit{Milnor fiber}. \sm

The Milnor fibration of $f$ then defines an open book decomposition of $S^{2n-1}$, with binding $L(f)$ and fibration $\phi$, called the \textit{Milnor open book} for $S^{2n-1}$. Moreover, this open book supports the standard contact structure $\xi_0=\operatorname{ker}\beta$ on $S^{2n-1}$. To see this, one needs to show that $\beta$ is a contact form on the binding $L(f)$ and $d\beta$ is positively symplectic on every page. Recall that the binding, link of singularity of $f$, is contactomorphic to the standard sphere. Hence, it is left to show that $d\beta$ is nondegenerate on every page. Note that the flow of the Reeb vector field $R$ for $\beta$ is given by $$e^{it}\cdot (z_1, \dots, z_n)=(e^{it/a_1}z_1, \dots, e^{it/a_n}z_n)$$ which is also a diffeomorphism from one page to another. Then the Reeb vector field $R$ for $\beta$ can be computed as follows: 
$$R=\frac{d}{dt}(e^{it/a_1}z_1, \dots, e^{it/a_n}z_n)=\sum_{i=1}^n \frac{1}{a_i}(x_i\frac{\partial}{\partial y_i}-y_i\frac{\partial}{\partial x_i}).$$
Observe that $d\beta(R, \cdot)=0$ and $\beta(R)=1$. The Reeb vector field is transverse to the pages, hence $d\beta$ is nondegenerate on pages.\sm

Unlike Theorem \ref{main_theorem} which examines open books of the boundary restrictions of Milnor fibers, we will consider Milnor open books of the standard contact sphere and prove the following theorem.
   
   \begin{theorem}[Fractional twists] \label{weightedthm}
The Milnor open book for the standard contact sphere $(S_{\epsilon}^{2n-1}, \xi_0=ker\beta)$ is the contact open book
\begin{center} 
$OB(\tilde{F}, \Phi_{\partial})$
\end{center}
where the page $\tilde{F}=\{f(z)=\delta\} \cap B_{\epsilon}^{2n}$ is a Weinstein domain and the monodromy $\Phi_\partial \in  \mbox{Symp}(\tilde{F}, d\beta, \partial \tilde{F})$ is a generalization of a fractional fibered Dehn twist along the boundary that is symplectically isotopic to a product of $\mu(f,0)$ right-handed Dehn twists along Lagrangian spheres.
 \end{theorem}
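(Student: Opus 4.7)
The plan is to adapt the strategy behind Theorem \ref{main_theorem} to the weighted homogeneous setting. The essentially new feature is that the weights $(a_1,\dots,a_n)$ are no longer equal, so the natural $S^1$-action on $\mathbb{C}^n$ does not preserve any single level set $\{f=\delta\}$ but instead permutes these level sets along a loop in $\mathbb{C}^*$. This is precisely what produces a \emph{fractional} rather than a full fibered Dehn twist.

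First I would use the Milnor fibration theorem to identify $\tilde F = \{f=\delta\}\cap B^{2n}_\epsilon$, for $0<\delta\ll \epsilon$ small, with the Milnor fiber of $f$ at $0$. The Reeb vector field computation already sketched in the excerpt shows that $(L(f),\beta|_{L(f)})$ is contact and that $d\beta$ is symplectic on each page, and the plurisubharmonic exhaustion $|z|^2/2$ upgrades $(\tilde F, d\beta)$ to a Weinstein domain, giving the open book $OB(\tilde F, \Phi_\partial)$ for $(S^{2n-1}_\epsilon, \xi_0)$. Next I would compute $\Phi_\partial$ as the time-$2\pi$ map of the weighted flow $e^{it}\cdot(z_1,\dots,z_n) = (e^{it/a_1}z_1,\dots,e^{it/a_n}z_n)$ appearing in the excerpt. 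This flow sends $\{f=\delta\}$ to $\{f=e^{it}\delta\}$ and returns to the original page at $t=2\pi$. Near $\partial\tilde F$ it agrees with the Reeb flow of $\beta|_{\partial\tilde F}$, so the induced monodromy is a rotation of the Boothby-Wang type circle fibers by a fraction of $2\pi$ determined by the weights; this is the sense in which $\Phi_\partial$ is a generalized fractional fibered Dehn twist.

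Third, I would Morsify: perturb $f$ within a compactly supported family of polynomials, keeping the boundary data fixed, to a function $\tilde f$ whose critical points in the interior of $B^{2n}_\epsilon$ are all nondegenerate. By the definition of the Milnor number these critical points number exactly $\mu(f,0)$. The resulting Lefschetz fibration $\tilde f : \tilde f^{-1}(D^2)\to D^2$ has total monodromy along $\partial D^2$ equal, by the standard theory of Lefschetz fibrations, to a product of $\mu(f,0)$ right-handed Dehn twists along the vanishing cycles, each of which is a Lagrangian sphere in a fiber symplectomorphic to $\tilde F$. A symplectic parallel transport argument, analogous to the one carried out for Theorem \ref{main_theorem}, then shows that this product is boundary-relative symplectically isotopic to $\Phi_\partial$.

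The main obstacle will be this last step: producing a symplectic isotopy from the Morsified Lefschetz monodromy to the original Milnor monodromy that is supported away from $\partial \tilde F$ and lies in $\mbox{Symp}(\tilde F, d\beta, \partial \tilde F)$. Here one needs to verify that the Boothby-Wang neighborhood used in the homogeneous case is replaced by a Seifert-fibered model adapted to the weights, and that the parallel transport through the Morsification respects this boundary structure so that the fractional rotation realizing $\Phi_\partial$ on a collar of $\partial \tilde F$ is preserved throughout the isotopy.
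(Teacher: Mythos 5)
Your proposal is correct and follows essentially the same route as the paper: identify the page with the Milnor fiber, realize the geometric monodromy as the weighted rotation $(z_1,\dots,z_n)\mapsto(e^{2\pi i w_1/d}z_1,\dots,e^{2\pi i w_n/d}z_n)$ untwisted near the boundary by a cut-off Hamiltonian flow (this is the ``generalization of a fractional fibered Dehn twist''), Morsify to obtain $\mu(f,0)$ vanishing cycles and hence a product of $\mu(f,0)$ right-handed Dehn twists, and conclude with the Liouville-flow isotopy of Theorem \ref{main_theorem}. The only organizational difference is that the paper, rather than perturbing $f$ directly, introduces the auxiliary polynomial $g(z,z_{n+1})=f(z)-z_{n+1}$ so that $S^{2n-1}$ is realized as the link of $g$ and the coordinate projection $z_{n+1}$ literally reproduces the Lefschetz-fibration setup of the main theorem; this is equivalent to your direct Morsification of $f$.
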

 
Here $\tilde{F}=\{f=\delta\}\cap B_{\epsilon}^{2n}$, namely the Milnor fiber, is a smoothing of the (singular) Weinstein domain $F=\{f=0\} \cap B_{\epsilon}^{2n}$, and $\mu(f,0)$ is the Milnor number of $f$ at the isolated singularity at the origin. See Section \ref{proof_of_main_theorem} for more details.\\

Although this work was done completely independently, after the paper appeared on the arXiv, it was brought to our attention that there was substantial overlap with Seidel \cite{PS}, in which he more or less proved Theorem \ref{weightedthm}.\\

\noindent\textbf{Plan of the Paper}

\begin{itemize}

\item In Section \ref{preliminaries}, we give basic definitions.
\item In Section \ref{boothby_wang}, we describe the Boothby-Wang circle bundle and a right-handed fibered Dehn twist explicitly and present the conditions for a Boothby-Wang circle bundle to possess a supporting open book and a monodromy as a right-handed fibered Dehn twist. 
\item In Section \ref{proof_of_main_theorem}, we give the proof of the main theorem by discussing the contact open book coming from the Lefschetz fibration with a product of right-handed Dehn twists as monodromy and the Boothby-Wang circle bundle with a right-handed fibered Dehn twist as monodromy. We also give the proof of Corollary \ref{main_corollary}, Theorem \ref{weightedthm}, and the exact number of right-handed Dehn twists appeared in both Theorem \ref{main_theorem} and Corollary \ref{main_corollary}.
\item In Section \ref{n_2}, we discuss the explicit case $n=2$ describing some mapping class group relations for surfaces and provide a proof of Corollary \ref{mapping_class_relationn}. \\
\end{itemize}

\noindent\textbf{Acknowledgements.} We would like to thank our advisor Ko Honda whose support and guidance were central throughout the completion of this project. We also thank Otto van Koert for his invaluable comments and edits on the first version of this paper and also for the instructive discussions on Section \ref{number}, Ailsa Keating for bringing Seidel's work to our attention, and Dan Margalit for his remark regarding mapping class group relations described in Section \ref{MCG}. We thank the referee for careful reading and several suggestions that have improved the exposition of our paper.

\section{Preliminaries}\label{preliminaries}

\begin{definition}A polynomial $f(z_1, \dots, z_n)=\Sigma{c_I z^I}$, where $I=(a_1,\dots,a_n)$ and $z=z_1^{a_1} \dots z_n^{a_n}$, is called a \emph{weighted homogeneous polynomial} or \emph{quasi-homogeneous polynomial} if there exists $n$ integers $w_1,\dots, w_n$ such that $d=w_{1}a_1+\dots+w_{n}a_n$ holds for each monomial of $f$.
\end{definition}
Integers $w_1, \dots, w_n$ are called \emph{weights} of the variables and the sum $d$ is called the \emph{degree} of the polynomial $f$. Observe that $f$ is a homogeneous polynomial when $w_1=\dots=w_n=1$.\sm

Equivalently, $f$ is a \emph{weighted homogeneous polynomial} if and only if 
\begin{equation*}
f(\lambda^{w_1}z_1, \dots, \lambda^{w_n}z_n) = \lambda^{d}f(z_1, \dots, z_n).
\end{equation*}

One can then discuss a locally free circle action on the variety $\{f=0\}$. Therefore, we obtain  a $\mathbb{C}^*$-action on $\mathbb{C}^n - \{0\}$ by 
\begin{align*}
\mathbb{C}^* \times \{f=0\} &\longrightarrow \{f=0\},\\
\lambda \cdot  (z_1, \dots, z_n) & \longmapsto (\lambda^{w_1}z_1, \dots, \lambda^{w_n}z_n).
\end{align*}
\noindent for each $\lambda \in S^1$.

\begin{definition} Let $f \in \mathbb{C}[z_0,\dots,z_n ]$ be a hypersurface singularity germ at the origin in $\mathbb{C}^n$, where $n \geq 0$. Let $S_{\epsilon}^{2n+1}=\partial B_{\epsilon}^{2n+2}$ be the boundary of the closed ball with radius $\epsilon$ centered at the origin and let $L(f)=\{f=0\} \cap S_{\epsilon}^{2n+1}$ be the corresponding link of singularity. Then the smooth locally trivial fibration
\begin{align*}
\phi:  S_{\epsilon}^{2n+1}- L(f) &\longrightarrow S^{1},\\
z &\longmapsto \ \frac{f(z)}{|f(z)|}.
\end{align*}
\noindent is called the \emph{Milnor fibration} of the function germ $f$. This is due to \cite{M}.
\end{definition}

When $f$ has an isolated singularity at the origin, then the link $L(f)$ is a smooth manifold. Moreover, any fiber $F_t=\phi^{-1}(t)$ is a smooth open manifold. In particular, the closure of each fiber, namely the \emph{Milnor fiber}, is a compact manifold with boundary $L(f)$ \cite{M}.\sm

Note that if $f$ has an isolated singularity at the origin, then the associated Milnor fiber $F$ has the homotopy type of a bouquet of $n$-spheres \cite{M}. The number of spheres in this bouquet is called the \emph{Milnor number}, denoted by $\mu(f, 0)$. \smallskip

 If $f$ is a weighted homogeneous polynomial with an isolated singularity at the origin and with weights $(w_1, \dots, w_n, d)$, then the Milnor number can be computed in terms of the weights \cite{AD} as follows:
\begin{equation*}
\mu(f, 0)=\prod_{i=1}^{n} \dfrac{d-w_i}{w_i}.
\end{equation*}

\begin{definition}Let $W$ be a $2n$-dimensional compact manifold with boundary. A {\em{Weinstein domain structure}} on $W$ is a triple $(\omega, Z, \phi)$, where 
\begin{enumerate}
\item $\omega=d\beta$ is an exact symplectic form on $W$,
\item $Z$ is a Liouville vector field for $\omega$ defined by $\beta=\iota_Z\omega$, 
\item $\phi : W \rightarrow \mathbb{R}$ is a generalized Morse function for which $Z$ is gradient-like,
\item $\phi$ has $\partial W=\{\phi=0\}$ as a regular level set.
\end{enumerate}
	The quadruple $(W, \omega, Z, \phi)$ is then called a \emph{Weinstein domain}.
\end{definition}

\begin{definition} A \emph{Lefschetz fibration} is a smooth map $g: X \rightarrow \D $, where $X$ is a $2n$-dimensional compact manifold with boundary and $\D$ is a 2-disk, with the following properties:
\begin{enumerate}
\item The critical points of $g$ are isolated, nondegenerate, and are in the interior of $X$.
\item If $p\in X$ is a critical point of $g$, then there are local complex coordinates $(z_{1},\dots , z_{n})$ about $p=(0,\dots, 0)$ on $X$ and $z$ about $g(p)$ on $\D$ such that, with respect to these coordinates, $g$ is given by the complex map $z = g(z_1, \dots, z_n) = z_{1}^{2} +\dots +z_{n}^{2}$.
\end{enumerate}
\end{definition}
The preimage of a regular value of $g$ is a $(2n-2)$-dimensional submanifold of $X$.

\section{Open Books for Boothby-Wang Bundles} \label{boothby_wang}

\begin{definition} An \textit{abstract open book decomposition} is a pair $(F, \Phi)$, where
\begin{enumerate}
\item $F$ is a compact $2n$-dimensional manifold with boundary, called the \textit{page} and
\item $\Phi: F \rightarrow F$ is a diffeomorphism preserving $\partial F$, called the \emph{monodromy}.
\end{enumerate} 
\end{definition}

\begin{definition}An \emph{open book decomposition} of a compact oriented manifold $Y$ is a pair $(B,\pi)$, where
\begin{enumerate}
\item $B$ is a codimension 2 submanifold of $Y$ with trivial normal bundle, called the \emph{binding} of the open book.
\item $\pi: Y - B \rightarrow S^{1}$ is a fiber bundle of the complement of $B$ and  the fiber bundle $\pi$ restricted to a neighborhood $B \times \mathbb{D}$ of $B$ agrees with the angular coordinate $\theta$ on the normal disk $\mathbb{D}$. 
\end{enumerate}
\end{definition}

Define $F_{\theta}:= \overline{\pi^{-1}(\theta)}$ and observe that $\partial F_{\theta}=B$ for all $\theta \in S^1$. We call the fiber $F=F_\theta$, for any $\theta$, a \emph{page} of the open book. The holonomy of the fiber bundle $\pi$ determines a conjugacy class in the orientation preserving diffeomorphism group of a page $F$ fixing its boundary, i.e., in $\mbox{Diff}^{+}(F,\partial F)$ which we call the {\em{monodromy}}. \sm

By using this description, one can construct a closed oriented $(2n+1)$-dimensional manifold $M$ from an abstract open book in the following way: Consider the mapping torus $F_{\Phi}=[0,1] \times F / (0, \Phi(z)) \sim (1, z)$. We set
\begin{equation*}
M_{(F, \Phi)}= F_{\Phi}\cup_{\partial F_{\Phi}} \left( \bigsqcup_{|\partial F_{\Phi}|} \partial F \times \mathbb{D} \right), \\
\end{equation*}
by gluing $\partial (\partial F \times \mathbb{D}) = \partial F \times S^1$ to each boundary component of $F_{\Phi}$, where $|\partial F_{\Phi}|$ is the number of boundary components of $F_{\Phi}$. Here  the boundary of each disk $\{pt\} \times S^1$ in $\partial F \times \mathbb{D}$ gets glued to $ S^1 \times \{pt\}$ in the mapping torus. Then $(F, \Phi)$ is an open book decomposition of a closed oriented $(2n+1)$-dimensional manifold $M$ if $M_{(F, \Phi)}$ is diffeomorphic to $M$. \smallskip

Note that the mapping torus $F_{\Phi}$ carries the structure of a smooth fibration $F_{\Phi} \rightarrow S^1$, away from $\partial F$, whose fiber is the interior of the page $F$ of the open book decomposition. 

\begin{definition} The \emph{symplectomorphism group} of $(F, \omega)$ consists of all orientation preserving diffeomorphisms $ \phi: F \stackrel\sim\rightarrow F$ such that $\phi$ preserves the symplectic form $\omega$, i.e., $\phi ^{\star}(\omega)=\omega$ and $\phi|_{\partial F}=id$. It is denoted by $\mbox{Symp}(F,\omega,\partial F) \subset \mbox{Diff}^{+}(F,\partial F)$.
\end{definition}

\begin{definition} A contact structure $\xi$ on a manifold $Y$ is said to be \emph{supported by an open book} $(B,\pi)$ of $Y$ if it is the kernel of a contact form $\lambda$ satisfying the following:

\begin{enumerate}
\item $\lambda$ is a positive contact form on the binding and
\item $d\lambda$ is positively symplectic on every page.
\end{enumerate}
If these two conditions hold, then the open book $(B,\pi)$ is called a \emph{supporting open book} for the contact manifold $(Y, \xi)$ and the contact form $\lambda$ is said to be \emph{adapted} to the open book $(B,\pi)$ or $\lambda$ is \emph{adapted by} the fiber bundle $\pi$.
\end{definition}

As mentioned in Section \ref{intro}, Giroux \cite{G} and Thurston-Winkelnkemper \cite{TW} showed that every open book decomposition gives rise to a supporting contact manifold. More precisely, to each triple $(F^{2n}, \lambda, \Phi)$, where $(F,d\lambda)$ is a Weinstein domain and $\Phi\in \mbox{Symp}(F, d\lambda, \partial F)$, we can associate a contact manifold $(Y^{2n+1},\xi)$.

\subsection{Boothby-Wang Circle Bundles or Prequantization Circle Bundles} Let $M^{2n}$ be a compact integral symplectic manifold with symplectic form $\omega$, where $[\omega] \in H^{2}(M;\mathbb{Z})$. Then there exists a complex line bundle $C$ over $M$ with $c_1(C)=[\omega]$ as the first Chern class classifies complex line bundles.\sm

 Consider the  associated principal circle bundle of $C$ given by $\Pi: P \rightarrow M$. Then according to \cite[Theorem 3]{BW}, there exists a connection $1$-form $\beta$ on $P$ such that 
\begin{enumerate}
\item $-2\pi \Pi^{*}\omega = d\beta$ and 
\item$\beta$ is a contact $1$-form on P.
\end{enumerate}

The contact form $\beta$ is called a  {\em{Boothby-Wang form}} carried by the circle bundle $P \xrightarrow {\Pi} M‎$. The fact that the contact form $\beta$ is a connection $1$-form implies that the vector field, $R_{\beta}$, generating the circle action satisfies $\mathcal{L}_{R_{\beta}}\beta \equiv 0$ and $\beta(R_{\beta}) \equiv 1$. This implies that $R_{\beta}$ is the Reeb vector field for $\beta$.
\vspace{1mm}
The circle bundle $(P \xrightarrow {\Pi} M‎, \beta)$ is then called a \emph{Boothby-Wang circle bundle} associated with $(M,\omega)$. Since the curvature $2$-form $\omega$ on $M$ makes the base space $M$ into a symplectic manifold, $(P, \beta)$ is called prequantum circle bundle (or Boothby-Wang circle bundle) that provides a geometric prequantization of $(M,\omega)$. This is due to Boothby and Wang, 1958 \cite{BW}.

\subsection{Fibered Dehn Twist} Let $F$ be a symplectic manifold with contact type boundary $\partial{F}$ carrying a free $S^{1}$-action on the neighborhood $\partial{F} \times [0,1]$ that preserves the contact form $\beta$ on $\partial{F}$. Here $\partial F\times [0,1]$ is a collar neighborhood of $\partial F=\partial F\times\{1\}$. Then we can define a \emph{right-handed fibered Dehn twist} along the boundary $\partial{F}$ as follows: 
\begin{align*}
\tau: \partial{F} \times [0,1] &\longrightarrow \partial{F} \times [0,1], \\
(z,t) &\longmapsto (z\cdot[s(t)\hspace{0.1cm}\mbox{mod}\hspace{0.05cm} 2\pi],t), 
\end{align*}

\noindent where $s:[0,1] \rightarrow \mathbb{R}$ is a smooth function such that $s(t)=0$ near $t=1$ and $s(t)=2\pi$ near $t=0$. It is easily verified that this is a symplectomorphism of $(\partial{F} \times [0,1],d(e^{t}\beta))$ which is equal to identity near boundary whose isotopy class is independent of the choice of $s$. See \cite{CDK} for more details.\sm

This notion was introduced by Biran and Giroux to emphasize that there are examples of symplectomorphisms such as fibered Dehn twists, which are not necessarily products of Dehn twists; see \cite{BiG}.

\subsection{Fractional Fibered Dehn Twist} Here we refer the reader to \cite{CDK2} for a more comprehensive discussion. The setup in this subsection is as follows: We will denote by $k$ the degree of the weighted homogeneous polynomial $f$ with an isolated singularity at 0 and by $F=\{f(z)=\delta\}\cap B^{2n}$ the associated Milnor fiber. Denote by $P$ the boundary of $F$ and let $l$ be any positive integer that divides $k$ and $(\tilde{P} \xrightarrow {\Pi} M‎, \beta)$ be a Boothby-Wang bundle over $(M, \frac{k}{l}\omega)$, where $\tilde{P}$ is a covering of $P$ given by the following $l$-fold covering map:
\begin{align*}
pr_P: \tilde{P} &\longrightarrow P, \\
p &\longmapsto p \otimes \dots \otimes p
\end{align*} 

Note that the map $pr_P$ is well-defined since $H^2(M; \mathbb{Z})$ is torsion-free as $l$ divides $k$.  Next, we want to extend the covering from $P=\partial F$ to the symplectic manifold $F$.\sm

\begin{definition}
A covering map $pr_F: \tilde{F} \mapsto F$ is called \emph{adapted} to $pr_{P}: \tilde{P} \mapsto P$ if $pr_F$ restricts to $pr_{P}$ on a collar neighborhood $P$.
\end{definition}

One can always find such an adapted $l$-fold covering $pr_F: \tilde{F}\rightarrow F$ for a $2n$-dimensional Weinstein manifold $F$ with $2n\geq 6$, because there exists a subgroup of index $l$ in $\pi_1(F)=\pi_1(\partial F)$ (this equality is true for dimensions 6 and higher) as $l | k$.\sm

Given an adapted covering, one can lift a fibered Dehn twist $\tau$ to a $\tilde{\tau}$, where
\begin{align*}
\tilde{\tau}: \partial \tilde{F} \times I &\longmapsto \partial \tilde{F} \times I, \\
(z, t) & \longmapsto (z \cdot[s(t)\hspace{0.1cm}\mbox{mod}\hspace{0.05cm} 2\pi],t)
\end{align*} 

\noindent where $s:[0,1] \rightarrow \mathbb{R}$ is a smooth function such that $s(t)=0$ near $t=1$ and $s(t)=\frac{2\pi}{l}$ near $t=0$ for a positive integer $l$. This implies that there is  an  action  of  $\mathbb{Z}_l$ given by $s(t)$ which induces rotation by roots of unity in some collar neighborhood $\partial \tilde{F} \times I$ of $\partial \tilde{F}$. Hence, the map $\tilde{\tau}$ can similarly be extended to the interior of $F$ by using deck transformation of the cover $\tilde{F} \rightarrow F$  which generates this $\mathbb{Z}_l$ action in the interior of the page. Observe that this action also rotates the margin of the page by $\frac{2\pi}{l}$.\sm

The map $\tilde{\tau}$ is symplectomorphism, see \cite{CDK2}. Observe also that for $l=1$, the map $\tilde{\tau}$ is a fibered Dehn twist.

\begin{definition}
The symplectomorphism $\tilde{\tau}$ given as above is called a \emph{right-handed fractional fibered Dehn twist} of power $l$ or a \emph{fractional twist} as defined in \cite{CDK2}.
\end{definition}

\subsection{Setup} Now we will provide a description of an adapted open book decomposition of a Boothby-Wang circle bundle over a symplectic manifold. This construction has been introduced by Biran and Giroux \cite{BiG} and generalized and clarified by Chiang, Ding and van Koert \cite{CDK}.\sm

Any integral symplectic manifold $(M^{2n}, \omega)$ has a compact symplectic hypersurface $H$ whose homology class $[H] \in H_{2n-2}(M;\mathbb{Z})$ is \Poincare dual to the symplectic class $[k\omega]\in H^{2}(M;\mathbb{Z})$ such that $F=M- \nu(H)$, the complement of an open tubular neighborhood $\nu(H)$ of $H$, carries a Weinstein domain structure, \cite{D} and \cite{G}. If one can patch $F$, $\partial F \times [0, 1]$, and $\partial F \times \mathring{\mathbb{D}}^2$ together, then the hypersurface $H$ is called an \emph{adapted Donaldson hypersurface}.  See \cite[Section 6.1]{CDK} for the patching argument. \sm

Now consider the Weinstein domain $(W,d\beta)$ with a Boothby-Wang type boundary $(P, \beta|_P)$. Then there are two constructions of a contact manifold using the given data. Firstly, we can construct a Boothby-Wang circle bundle $(P \xrightarrow {\Pi} M‎, \beta)$ over an integral symplectic manifold $M$. We can also define a right-handed fibered Dehn twist $\Phi_{\partial}$ along the boundary of $F$, then define a contact open book of $\partial W$ with page $F=M-H$ and monodromy as $\Phi_{\partial}$. Thus, we can talk about the relation between the Boothby-Wang circle bundle and its associated open book. That relation is given by the following theorem.

\begin{theorem} \label{theorem 3.1}\cite[Theorem $6.3$ and Corollary $6.4$]{CDK} Let $(W,d\beta)$ be a Weinstein domain with a Boothby-Wang type boundary $(\partial W=P, \beta|_{P})$ and $\omega$ be an integral symplectic form on $M$. Let $H$ be an adapted Donaldson hypersurface \Poincare dual to $[\omega]$, $F=M-H$, and $\Phi_{\partial}$ be a right-handed fibered Dehn twist along $\partial F$. Then
\begin{enumerate}
\item The Boothby-Wang circle bundle  $(P \xrightarrow {\Pi} M‎, \beta)$ associated with $(M, \omega)$ has an open book decomposition whose monodromy is a right-handed fibered Dehn twist.
\item The open book $(F, \Phi_{\partial})$ with page $F$ and monodromy $\Phi_{\partial}$ is contactomorphic to the Boothby-Wang circle bundle $(P \xrightarrow {\Pi} M‎, \beta)$ associated with $(M, \omega)$. 
\end{enumerate}
\end{theorem}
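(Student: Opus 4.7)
The plan is to construct the claimed open book directly from the Boothby-Wang data and the Donaldson hypersurface, and then verify both its combinatorial structure and its compatibility with $\beta$. Let $\mathcal{L}\to M$ be the complex line bundle with $c_1(\mathcal{L})=[\omega]$ whose unit circle bundle is $P$. Because $H$ is \Poincare dual to $[\omega]$, the hypersurface $H$ arises as the transverse zero locus of a section $s:M\to\mathcal{L}$, and away from $H$ the normalized section $s/|s|$ is a section of $P|_{M\setminus H}$. This section defines a trivialization $P|_{M\setminus H}\cong (M\setminus H)\times S^1$ and therefore a candidate fibration
\[
\pi:P\setminus B\longrightarrow S^1,\qquad p\longmapsto p\cdot\bigl((s/|s|)(\Pi(p))\bigr)^{-1},
\]
with proposed binding $B:=\Pi^{-1}(H)$. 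I would first verify the two open-book axioms: $B$ has codimension two, and its normal bundle in $P$ is the pullback $\Pi^*N_H$ of the normal line bundle of $H\subset M$, which is trivialized by $s$ on a tubular neighborhood. In local coordinates $(h,z)$ on $\nu(H)\cong H\times\D$ with $s(h,z)=z$, a local trivialization of $P$ yields coordinates $(h,z,e^{i\psi})$ in which $B=\{z=0\}$ and $\pi(h,z,e^{i\psi})=e^{i(\psi-\arg z)}$; restricted to a normal disk this is just $-\arg z$ up to rotation, matching the angular-coordinate requirement.

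Next I would identify the monodromy. On the Weinstein region $F=M\setminus\nu(H)$ the trivialization $s/|s|$ is smooth and extends to a smooth product $P|_F\cong F\times S^1$, so the monodromy of $\pi$ is isotopic to the identity on the interior of $F$. The nontrivial behavior is concentrated in the boundary collar $\partial F\times[0,1]$: the local formula $\pi=\psi-\arg z$ shows that $s/|s|$ winds once around $B$ as one circumnavigates $H$, and the compatibility of the open book gluing forces the collar monodromy to rotate $\partial F$ by a full $2\pi$ across the collar. This is exactly the defining local model of the right-handed fibered Dehn twist $\Phi_\partial\in\mbox{Symp}(F,d\beta,\partial F)$, which gives part $(1)$.

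For part $(2)$, I would verify the two contact-compatibility conditions for $\beta$ on $(B,\pi)$. The restriction $\beta|_B$ is contact because $\Pi|_B:B\to H$ is itself a Boothby-Wang bundle over the integral symplectic manifold $(H,\omega|_H)$. On each page $F_\theta$, the form $d\beta$ restricts to a positive symplectic form: $R_\beta$ generates the $S^1$-action on $P$, $\pi$ is not $S^1$-invariant away from $B$, so $R_\beta$ is transverse to the pages, and the Weinstein structure on the interior of $F$ is matched to $d\beta$ by the patching argument of \cite[Section 6.1]{CDK}. Both contact structures --- the Boothby-Wang form $\ker\beta$ on $P$ and the one produced by the Giroux-Thurston-Winkelnkemper construction from $(F,\Phi_\partial)$ --- are then supported by the same open book and hence are contactomorphic. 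The main technical obstacle is the local monodromy computation near the binding: one must show that the winding of $s/|s|$, the normal geometry of $H\subset M$, and the connection $\beta$ combine to produce exactly one full right-handed integer fibered Dehn twist rather than a fractional one, which boils down to showing that the $s$-induced phase around a small meridian of $H$ is precisely the Reeb period $2\pi$, a fact that depends sharply on $H$ being \Poincare dual to $[\omega]$ itself and not to any integer multiple.
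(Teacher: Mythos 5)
This statement is not proved in the paper at all: it is imported verbatim from \cite[Theorem 6.3 and Corollary 6.4]{CDK}, so there is no in-paper argument to compare against. Your sketch is, in substance, a correct reconstruction of the Biran--Giroux/Chiang--Ding--van Koert construction that the citation refers to: trivialize $P$ over $M\setminus H$ by the unit section $s/|s|$ of the line bundle whose transverse zero set is $H$, take $\pi$ to be the resulting angular function, identify $B=\Pi^{-1}(H)$ as a Boothby--Wang bundle over $(H,\omega|_H)$, and locate all the monodromy in the collar where $s/|s|$ winds once around $B$. Two points deserve more care than your sketch gives them. First, the handedness: your local formula produces $-\arg z$ on normal disks, and whether the resulting collar twist is right- or left-handed is exactly a bookkeeping question about the orientation of the pages (induced by $d\beta$) versus the coorientation of $B$ and the sign convention $-2\pi\Pi^*\omega=d\beta$; ``up to rotation'' elides the one sign that the theorem actually asserts. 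Second, for part (2) the statement ``both contact structures are supported by the same open book and hence are contactomorphic'' is not the correct uniqueness principle in dimension $\ge 5$: one needs that the pages agree as Liouville (Weinstein) domains and that the monodromies agree up to symplectic isotopy in $\mbox{Symp}(F,d\beta,\partial F)$, which is precisely what the patching argument of \cite[Section 6.1]{CDK} and the collar computation supply; merely supporting the same topological open book would not suffice. With those two points made precise, your argument is the proof given in the cited reference.
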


\begin{definition} An open book decomposition $(B,\pi)$ is \emph{coherent} with a Boothby-Wang circle bundle $(P \xrightarrow {\Pi} M‎, \beta)$ if 
\begin{enumerate}
\item The contact form $\beta$ is adapted by the fibration $\pi: P -\Pi^{-1}(H)\rightarrow S^{1}$, where $\Pi: P \rightarrow M$ is the Boothby-Wang circle bundle.
\item The binding $B=\Pi^{-1}(H)$ for some adapted Donaldson hypersurface $H \subset M$.
\item The monodromy $\Phi_{\partial}$ is a right-handed fibered Dehn twist.
\end{enumerate}
\end{definition}
   
   \section{Proofs of Theorem \ref{main_theorem} and \ref{weightedthm}}\label{proof_of_main_theorem} 
   \label{main}
   
   Throughout, we denote the coordinates on $\mathbb{C}^{n+1}$ by $z_j = x_j + iy_ j $, $j = 0,\dots,n$, and the open book with page $F$ and monodromy $\Phi$ by $OB(F, \Phi)$.\sm
   
   The proof is by comparing two points of view, namely the Lefschetz fibration point of view and the Boothby-Wang fibration point of view. Throughout, $f$ is a homogeneous polynomial of degree $k$ with an isolated singularity at 0 and $W=\{f(z_0, \dots, z_n )=\delta\} \cap B^{2n+2}_{\epsilon}$ is the Weinstein domain unless otherwise is stated.
   
   \subsection{Lefschetz Fibration Construction} Denote by $z_n : W \rightarrow \mathbb{C}$ the projection map to the last coordinate with an isolated singularity. Then the restriction map, denoted by $h:= z_{n}|_{\{|z_n|\leq \epsilon\}} : W\cap \{|z_n|\leq \epsilon\} \to \{|z_n|\leq \epsilon\}$, for $\epsilon$ small, is a Lefschetz fibration after Morsifying its critical points which will be explained in the next paragraph and  $F = W \cap \{z_{n}= \epsilon\}$ is the generic fiber of the Morsification of this fibration. \sm
   
   Since all critical points of a Lefschetz fibration are of Morse-type, we need to perturb $z_n$ in a certain stable way so that the isolated degenerate singularity at 0 will split up into other isolated singularities which are non-degenerate. This deformation is called \textit{Morsification} which is a small deformation of $z_n$ having $r$ distinct Morse critical points $p_1, \dots , p_r$ and critical values $c_1, \dots , c_r$. Here $z_n + \eta g$ is a Morsification of $z_n$ if $g(z)$ is a linear function in general position and $\eta$ is very small. We also know that there are finitely many such Morse-type critical points, i.e., $r$ is finite, after Morsification of $z_n$ since such critical points are isolated. One can also prove that the exact number of such critical points is $r=k(k-1)^n$. For the proof, we refer the reader to Section \ref{number}.\smallskip

That is to say, we have finitely many Lagrangian spheres in the fiber coming from critical values. Moreover, it gives rise to associated monodromies. Denote the monodromy associated to critical values by $\Phi_{1}\circ \dots \circ \Phi_{k(k-1)^n} $, where each $\Phi_{i}\in \mbox{Diff}^{+}(F,\partial F),  i=1,\dots,k(k-1)^n$, is a right-handed Dehn twist. Then we have the following open book decomposition associated to the boundary restriction of the Lefschetz fibration $h$:
   
\begin{itemize}
\item Page: $F = W \cap \{z_{n}=\epsilon \}$.
\item Binding: $B = \partial W \cap \{z_{n}=0\}$.
\item Monodromy: $\Phi_{1}\circ \dots \circ \Phi_{k(k-1)^n}$.
\end{itemize}

Here observe that $F$ is a retracted page of the open book decomposition\\ $(B, \Phi_{1}\circ \dots \circ \Phi_{k(k-1)^n})$. By abuse of notation, we refer to a retracted page as a page of the open book. \sm

Let us consider the boundary of the Weinstein domain $W\cap \{|z_{n}| \leq \epsilon\}$. We know that an open book naturally arises as boundary restriction of a Lefschetz fibration. According to the Lefschetz fibration construction above, $\partial (W \cap \{|z_{n}| \leq \epsilon\}) $  is supported by the open book decomposition with page $F$ and monodromy  $\Phi_{1}\circ \dots \circ\Phi_{k(k-1)^n}$. The binding of this open book is the intersection of the link associated to the homogeneous polynomial $f$, i.e., $\partial W = \{f=0\} \cap S^{2n+1}$ with the hyperplane $\{z_n =0\}$. \sm

The analysis above provides a description of the monodromy coming from the Lefschetz fibration point of view. In order to analyze the desired isotopy between the two monodromies mentioned in the main theorem, it suffices to describe the monodromy coming from the Boothby-Wang point of view, namely a right-handed fibered Dehn twist. In order to do that, we need the data given in Theorem \ref{theorem 3.1}.

\subsection{Boothby-Wang Construction} Here we closely follow the description in \cite{CDK}. Consider the Weinstein domain $W$ with the boundary $\partial W$ carrying a free $S^1$-action which preserves the contact form. We can extend this action to the neighborhood $\partial W \times [0,1]$ so that the contact form $\beta$ on $\partial W$ is preserved. \smallskip

Let $\Pi: \partial W \rightarrow M$ be a principal circle bundle such that the $S^1$-action preserves the contact form. Then circle fibers are closed orbits of the Reeb vector field $R_{\beta}$, i.e., Reeb foliates circle fibers. So we can view the boundary of the Weinstein domain $W$ lying in $S^{2n+1}$ as a Boothby-Wang type boundary. \sm

Now consider the fibration 
\begin{align*}
\pi:\partial W - \{z_{n}=0\} &\to S^{1}, \\
 (z_{0},\dots,z_{n}) &\mapsto \frac{z_{n}}{|z_{n}|} .
\end{align*}

Observe that $H=\{z_{n}=0\}$ is an adapted Donaldson hypersurface in $M$ for which  $\Pi^{-1}(\{z_{n}=0\})=\partial W \cap \{z_{n}=0\}$ is the binding of the open book decomposition of $\partial W$ with page $F=\pi^{-1}(\theta_{0})$ for some fixed angle $\theta_{0}=arg(z_{n})$ and monodromy a right-handed fibered Dehn twist. The Boothby-Wang circle bundle $(\partial W \xrightarrow {\Pi} M‎, \beta)$ is then contactomorphic to a contact structure supported by the open book decomposition of $\partial W$ by Theorem \ref{theorem 3.1}.\sm
 
 Also, the open book decomposition discussed above is coherent with the Boothby-Wang circle bundle $(\partial W \xrightarrow {\Pi} M‎, \beta)$ since the contact form on $\partial W$ is adapted to the open book $(B=\Pi^{-1}(\{z_{n}=0\}), \pi)$ and the monodromy is a right-handed fibered Dehn twist. \sm

  So, the coherent open book decomposition is as follows:
\begin{itemize}
\item Page: $F=(\partial W-\{z_{n}=0\})  \cap \{arg(z_{n})=\theta_{0}\}$ for some fixed $\theta_{0}$.
\item Binding: $B= \partial W \cap \{z_{n}=0\}$.
\item Monodromy: $\Phi_{\partial}$, a right-handed fibered Dehn twist.
\end{itemize}

\subsection{Comparison of Two Views} Observe that the pages and the bindings of the two open books discussed above are exactly the same up to symplectomorphism. To finish the proof, we have to check that the monodromy coming from the Lefschetz fibration point of view (product of Dehn twists along Lagrangian spheres) is symplectically isotopic to the monodromy coming from the Boothby-Wang fibration point of view (right-handed fibered Dehn twist) relative to the boundary. In order to show that, we will construct a symplectic isotopy so that we have the desired equivalence.\sm

\begin{notation}
Denote the page coming from the Lefschetz fibration construction by $\Sigma_0=W\cap \{z_n=\epsilon\}$ and the slightly retracted page coming from the Boothby-Wang construction by $\Sigma_1 = (\partial W-\{z_{n}=t\epsilon | t \in [0,1]\})  \cap \{arg(z_{n})=\theta_{0}\}  \subset S^{2n+1}$.
\end{notation}

\begin{figure}[h]
\begin{overpic}[scale=1,tics=10]{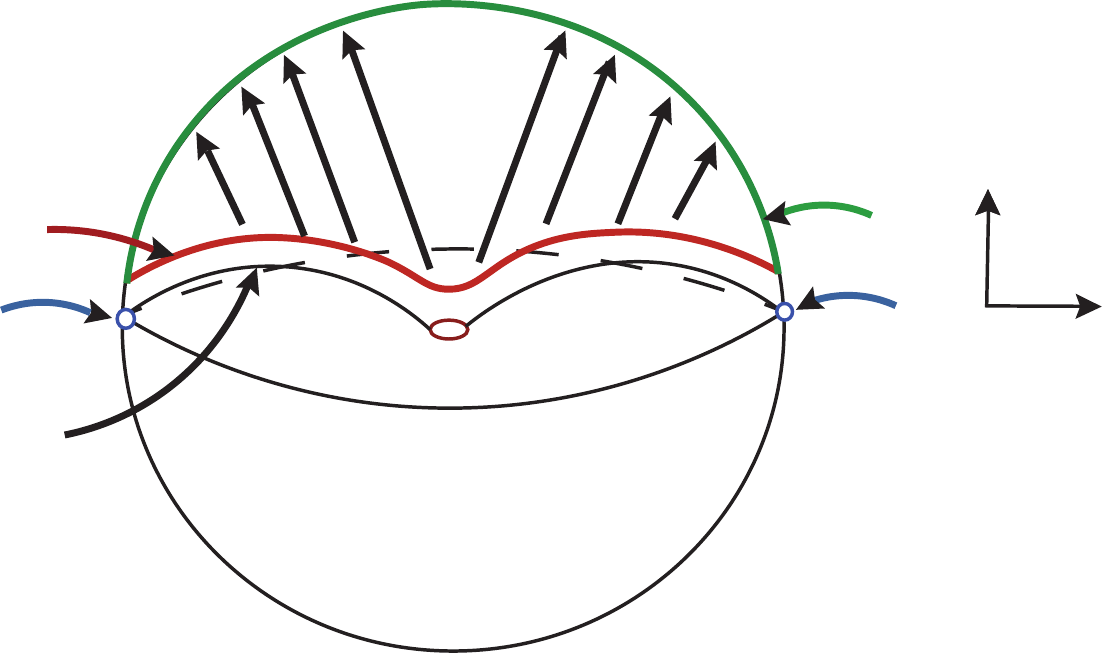}
    \put(82,30){$B$}
    \put(-4,30){$B$}
    \put(40,25){$0$}
     \put(-9,16){$W \cap \{z_n=0\}$}
    \put(35,52){Liouville}
    \put(38, 48){flow}
    \put(-1, 38){$\Sigma_0$}
    \put(80,39){$\Sigma_1$}
    \put(90, 28){$z_0,\dots,z_{n-1}$}
    \put(88, 43){$z_n$}
    \put(55, 27){$W^{2n}$}
    \put(65, 6){$S^{2n+1}$}
    \vspace{7mm}
\end{overpic}
	\caption{Pictorial description of the symplectic isotopy between the pages $\Sigma_0$ and $\Sigma_1$. We label the blue dot as the binding $B$, the red curve as the page $\Sigma_0$, and the green curve as the page $\Sigma_1$. Here 0 denotes the isolated singularity of $f$.}
    \label{proof}
\end{figure}

\begin{claim} There exists a symplectic isotopy from $\Sigma_{0}$ to $\Sigma_{1}$ relative to the endpoints, i.e., there exists a one parameter family of $(2n)$-dimensional symplectic submanifolds $(\Sigma_{t}, d\beta|_{\Sigma_t})$, $t \in [0,1]$ as depicted in the Figure \ref{proof}.
\end{claim}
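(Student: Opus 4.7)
The plan is to construct the isotopy explicitly, realizing $\{\Sigma_t\}_{t\in[0,1]}$ as a smoothly varying family of common zero-loci of real-valued function pairs on $W$. Writing $\Sigma_0$ as the zero set of $(u_0,v_0) = (x_n - \epsilon,\, y_n)$ and $\Sigma_1$ as the zero set of $(u_1,v_1) = (|z|^2 - \epsilon^2,\, y_n\cos\theta_0 - x_n\sin\theta_0)$, restricted to the complement of the binding $B = \partial W \cap \{z_n = 0\}$, I would interpolate smoothly in $t$ -- to first approximation via the convex combination $(u_t,v_t) := (1-t)(u_0,v_0) + t(u_1,v_1)$ -- and set $\Sigma_t := \{u_t = 0\} \cap \{v_t = 0\} \cap W$.

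The required verifications at each $t$ are: (a) $\Sigma_t$ is a smooth $(2n-2)$-dimensional submanifold of $W$, equivalent to the transversality $du_t \wedge dv_t \neq 0$ along $\Sigma_t$; (b) $d\beta|_{\Sigma_t}$ is non-degenerate, which, since $\Sigma_t$ has codimension $2$ in $W$, is equivalent to the non-vanishing of the Poisson bracket $\{u_t,v_t\}_{d\beta}$ along $\Sigma_t$ (this bracket is exactly the induced symplectic form on the $2$-dimensional symplectic normal complement to $\Sigma_t$ in $W$); (c) the family is smooth in $t$ and reduces to $\Sigma_0$, $\Sigma_1$ at the endpoints, with the binding $B$ preserved throughout. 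Symplecticity at the endpoints is automatic: $\Sigma_0$ is cut out by the holomorphic function $z_n - \epsilon$ and hence is a complex submanifold of $W \subset \mathbb{C}^{n+1}$, while $\Sigma_1$ is symplectic by the Boothby-Wang condition that the angular coordinate $\arg z_n$ on $\partial W - B$ is adapted to $\beta|_{\partial W}$.

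The main obstacle will be maintaining Poisson-bracket positivity $\{u_t,v_t\}_{d\beta} \neq 0$ along $\Sigma_t$ uniformly in $t$: this is a global transversality constraint and can fail for the naive convex combination near $\partial W$, where the defining functions change from complex-linear to real-quadratic. If it does, the remedy is to replace the straight-line homotopy by a geometrically natural two-stage deformation. First, apply the intrinsic Liouville vector field $Z_W$ of the Weinstein domain $(W,d\beta)$ (distinct from the ambient Euler field on $\mathbb{C}^{n+1}$, which is transverse to $W$ since $f$ is degree-$k$ homogeneous) to flow $\Sigma_0$ outward toward $\partial W$; this flow is symplectically conformal, hence preserves symplecticity. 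Second, use symplectic parallel transport along the radial path $\{\arg z_n = s\theta_0\}_{s \in [0,1]}$ in the base of the Lefschetz fibration $h = z_n|_W$ -- which avoids the unique critical value $0$ of $h$ and is therefore well-defined -- to rotate the flowed-out page into $\Sigma_1$. Concatenating these two deformations produces the required boundary-relative symplectic isotopy from $\Sigma_0$ to $\Sigma_1$.
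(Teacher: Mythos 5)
Your general framework in (a)--(c) is sound: reformulating symplecticity of a codimension-two submanifold as non-vanishing of the Poisson bracket of its defining functions is correct, and it is essentially a dual version of the paper's own criterion, which instead checks that each $\Sigma_t$ stays transverse to the symplectic $2$-plane field spanned by the ambient radial vector field $X=\tfrac12\sum_j(x_j\partial_{x_j}+y_j\partial_{y_j})$ and the angular field $Y=\partial_\theta$ in the $z_n$-coordinate, and then extracts positivity of $(d\beta)^n|_{\Sigma_t}$ from $(d\beta)^{n+1}(X,Y,\cdot)>0$. Your endpoint observations ($\Sigma_0$ complex hence symplectic, $\Sigma_1$ symplectic by adaptedness) are also fine. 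The gap is that you never establish the uniform non-degeneracy for any concrete family: you concede yourself that the convex combination of defining functions may fail near $\partial W$, and the fallback you offer does not actually produce a path from $\Sigma_0$ to $\Sigma_1$.

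Concretely, two things go wrong with the two-stage remedy. First, the intrinsic Liouville vector field of $(W,d\beta)$ is outward-transverse to $\partial W$, so its forward flow is undefined at the points of $\partial\Sigma_0\subset\partial W$, it never reaches $\partial W$ in finite time from the interior, and its limiting trace on $\partial W$ is the set of boundary points whose backward trajectories meet $\Sigma_0$, which is not $\Sigma_1$; without a cutoff it also fails to fix the common boundary $\partial\Sigma_0=\partial\Sigma_1=\partial W\cap\{z_n=\epsilon\}$, so the isotopy would not be relative to the endpoints. Second, symplectic parallel transport of $h=z_n|_W$ only carries interior fibers $W\cap\{z_n=c\}$ to other interior fibers; it cannot produce the Boothby--Wang page $\Sigma_1$, which lies entirely inside $\partial W$. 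The isotopy required here is the radial ``corner-rounding'' from the flat Lefschetz fiber to the page sitting in the boundary (Figure \ref{proof}), not an angular rotation in the base of the fibration. The paper achieves this by flowing along $gX$ with $g>0$ in the interior and chosen so that the common boundary is fixed, and the entire content of the proof is the pointwise verification that every intermediate slice remains transverse to $\mathrm{span}(X,Y)$; that verification --- or your Poisson-bracket analogue of it for an explicitly specified family --- is precisely the step your proposal leaves open.
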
\sm

\begin{claimproof} Let $\beta$ be the usual Liouville 1-form on $\mathbb{C}^{n+1}$. Let $X$ and $Y$ be the radial vector field (the Liouville vector field of $\beta$ on all of $\mathbb{C}^{n+1})$ and the rotational vector field in the $z_n=x_n +i y_n$ coordinate, respectively, given as follows: 
\vspace{-.3cm}

\begin{center}
\begin{align*}
X&= \frac{1}{2} \displaystyle\sum_{j=0}^{n} (x_j \frac{\partial}{\partial x_j} + y_j \frac{\partial}{\partial y_j})\\
Y&=\frac{\partial}{\partial \theta}
\end{align*}
\end{center}
\vspace{.3cm}

Consider the flow $gX$, where $g$ is some suitable positive function on Int$(\Sigma_0)$ and is the identity along $\partial \Sigma_0$. Then one can obtain one parameter family of $(2n)$-dimensional submanifolds $\Sigma_t$ by flowing from $\Sigma_0$ to $\Sigma_1$ along the radial vector field $gX$. Furthermore, $\Sigma_{t}$ is transverse to the vector fields $X$ and $Y$ since it is transverse to a multiple of $X$.  Because the boundary of each $\Sigma_t$ is the same, the transversality condition still holds for each hypersurface $\Sigma_t$ at the boundary.\sm

Let $V=\{v_1,\dots,v_{2n}\}$ be a basis of $T\Sigma_t$ at some point $p$. We want to show that $d\beta= \sum_{j=0}^{n}dx_j \wedge dy_j >0$ is symplectic on each $\Sigma_t$ to ensure $\Sigma_t$ is symplectic for $t\in [0,1]$.\sm

We compute $(d\beta)^n$ as follows: Recall that we already know that $(d\beta)^{n+1}(X, Y, V)>0$ since $d\beta$ is the standard symplectic form $\mathbb{C}^{n+1}$. Then 
\begin{equation} 
\begin{aligned}
(d\beta)^{n+1}(X, Y, V) &= n(dx_n \wedge dy_n)(dx_0\wedge dy_0 + \dots + dx_{n-1}\wedge dy_{n-1})^{n} (X, Y, V) \nonumber \\
 &= c (dx_0\wedge dy_0 + \dots + dx_{n-1}\wedge dy_{n-1})^{n} (V)) >0 \hspace{1.1cm} (\star)
\end{aligned}
\end{equation}
implies that $(d\beta)^{n}(V)=(dx_0\wedge dy_0 + \dots + dx_{n-1}\wedge dy_{n-1})^{n}(V)$ is positive since $c>0$.\sm

The last inequality $(\star)$ above is equivalent to the following condition: the projection of $\Sigma_t$ to the first $n$ coordinates (i.e., projection to the $z_0,\dots,z_{n-1}$-plane $Z$) is symplectic.  In other words, $pr:\Sigma_t \to pr(\Sigma_t)$ is a covering space over a symplectic submanifold of the plane $Z$. Also, notice that $\Sigma_0$ and $\Sigma_1$ satisfy $(\star)$ above since $\beta|_{\Sigma_0}$ and $\beta|_{\Sigma_1}$ are Liouville. So each $\Sigma_t$ is symplectic. Therefore, we have a symplectic isotopy relative to the boundary.
 \end{claimproof} \sm

Thus, a right-handed fibered Dehn twist on $\Sigma_1$ is symplectically isotopic to a product of right-handed Dehn twists on $\Sigma_0$. This finishes the proof of Theorem \ref{main_theorem}.

\subsection{Proof of Corollary \ref{main_corollary}}

With Theorem \ref{main_theorem} in hand, we are ready to complete the proof of Corollary \ref{main_corollary}. Here $W= \{f(z)=z_0^k+\dots,z_n^k= \delta \}\cap B^{2n+2}_{\epsilon}$.
   
   \begin{proof}
   Follows from the proof of Theorem \ref{main_theorem}. In this case, the monodromy coming from the Lefschetz fibration is the $k$-th power of a product of right-handed Dehn twists. This follows from the fact that each fiber (page) $F_{\theta}$ has the following behavior:   
\begin{equation} 
\begin{aligned}
F_{\theta} =W \cap \{z_{n}=\epsilon e^{i\theta}\} =\{z_{0}^{k}+\dots+z_{n-1}^{k}=\delta-\epsilon^{k} e^{i\theta k}\}. \nonumber
\end{aligned}
\end{equation}

In other words, the monodromy on $F_{\theta}$ repeats itself $k$ times since it goes over the same $\Phi$ (a product of right-handed Dehn twists) $k$ times as $\theta$ varies. Observe also that the $\theta=0$ case gives us the Lefschetz fiber together with the monodromy $\Phi$. Furthermore, when it sweeps out all of $\theta \in [0,2\pi)$, one gets a right-handed fibered Dehn twist $\Phi_{\partial}$ along the boundary as monodromy which is equal to $\Phi^{k}$ up to symplectic isotopy. \sm

One can Morsify $f$ by adding the linear function $\eta(z_{0}+\dots+z_{n})$ and  observe that degenerate isolated singularity at the origin splits into $(k-1)^n$ many nondegenerate critical points in the fiber. To see this: Take partial derivatives of the Morsified $f(z_0, \dots, z_n)= z_{0}^{k}+\dots+z_{n-1}^{k}+\eta(z_{0}+\dots+z_{n})$ and observe that each partial derivative will generate $(k-1)$ roots. Since there are $n$ variable,  $f$ will have $(k-1)^n$ critical points. Therefore, there are $(k-1)^n$ many nondegenerate critical points, i.e., $\Phi$ is a product of $(k-1)^n$ right-handed Dehn twists. Hence, a right-handed fibered Dehn twist is isotopic to a product of $k(k-1)^n$ right-handed Dehn twists since $\Phi_{\partial}=\Phi^{k}$.
   \end{proof} 
   
   Note that, when $k=1$, $W=\mathbb{C}^{n}$ and a fibered Dehn twist is symplectically isotopic to the identity.  \sm
   
   In the case when $k=2$, $W$ is the cotangent bundle to the $n$-sphere and $\Phi_{\partial}$ is the same as the square of a Dehn twist along the zero section.
   
\subsection{Proof of the Theorem \ref{weightedthm}} Proof follows along the same lines of the proof of Theorem \ref{main_theorem}. Here, unlike Theorem \ref{main_theorem}, $f$ is a weighted homogeneous polynomial with an isolated singularity at 0 and we will study the associated Milnor fibration of the standard sphere $S^{2n-1}$. \smallskip

In this subsection, we wish to prove that the Milnor open book for the standard sphere $S^{2n-1}$ is the contact open book whose monodromy $\Phi_{\partial}$ is a generalization of a fractional fibered Dehn twist along the boundary that is symplectically isotopic to a product of $\mu(f, 0)$ Dehn twists along Lagrangian spheres. \smallskip

Let $z=(z_1, \dots, z_{n})$. Consider the link of $g(z, z_{n+1}):=f(z)-z_{n+1}$ at the origin given as
$$L_{\epsilon}(g)=\{(z,z_{n+1}) \in \mathbb{C}^{n} \times \mathbb{C} \mid g(z,z_{n+1})=0\} \cap S_{\epsilon}^{2n+1}$$ and also its canonical filling given as $$V_{\epsilon}(g)=\{(z,z_{n+1}) \in \mathbb{C}^{n} \times \mathbb{C} \mid g(z,z_{n+1})=0\} \cap B_{\epsilon}^{2n+2}.$$

One can always smooth this filling by using a cutoff function that only depends on $|z|$. Denote the smoothing of $V_\epsilon (g)$ by
$$\tilde{V}_\epsilon (g)=\{g(z,z_{n+1})=\rho \} \cap B_{\epsilon}^{2n+2}.$$

Note that $L_{\epsilon}(g)$ is contactomorphic to the standard contact sphere  $S^{2n-1}$. To see this, let $f(z)=z_{n+1}$ in $L_{\epsilon}(g)$ and observe that this is just a graph of $S^{2n-1}$. Note that one can use the fibration $L_{\epsilon}(g)-\{z_{n+1}=0\}$ sending $(z, z_{n+1}) \mapsto \frac{z_{n+1}}{|z_{n+1}|}$ to study the open book for the $L_{\epsilon}(g)$. Therefore, finding an open book for the standard sphere $S^{2n-1}$ is the same as finding an open book for the boundary of  $\tilde{V}_\epsilon (g)=\{g(z_1,\dots, z_{n+1})=\rho\} \cap B_{\epsilon}^{2n+2}$. Hence, we will use the same techniques as in the proof of the main theorem to observe that the standard contact sphere has the following open book: \sm

\begin{itemize}
\item Page: $\tilde{F}=\{f(z)=\delta\} \cap  B_\epsilon^{2n}$.
\item Binding: $L_\epsilon(f)= \{f(z)=0\} \cap S_\epsilon^{2n-1}$.
\item Monodromy: $\Phi=\Phi_{1}\circ \dots \circ \Phi_{\mu(f,0)}$, where each $\Phi_{i}$ is a right-handed Dehn twist and $\mu(f,0)$ is the Milnor number of $f$.\sm
\end{itemize}

On the other hand, one also needs to describe what the monodromy looks like on the boundary, i.e., prove the following claim. Finally, to finish the proof, we will apply the Liouville flow technique as in the proof of Theorem \ref{main_theorem} to show the symplectic isotopy between two monodromies.\sm

\begin{claim}
The monodromy $\Phi_\partial$ is a generalization of a fractional fibered Dehn twist.
\end{claim}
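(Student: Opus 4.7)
The strategy is to exhibit the geometric monodromy of the Milnor fibration in closed form using the weighted $S^{1}$-symmetry of $f$ and then identify its restriction to a collar of $\partial\tilde{F}$ with a rotation by $2\pi/d$ along the Reeb/Seifert orbits of the binding $L(f)$, which is exactly a fractional fibered Dehn twist of power $d$ once the notion from Section \ref{boothby_wang} is extended to allow a locally free $S^{1}$-action on the boundary.

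First I would exploit weighted homogeneity: because $f(\lambda^{w_{1}}z_{1},\dots,\lambda^{w_{n}}z_{n})=\lambda^{d}f(z)$, the family
\begin{equation*}
\rho_{\theta}(z):=(e^{i\theta w_{1}/d}z_{1},\dots,e^{i\theta w_{n}/d}z_{n})
\end{equation*}
satisfies $f\circ\rho_{\theta}=e^{i\theta}f$, so $\rho_{2\pi}$ preserves $\tilde{F}=\{f=\delta\}\cap B_{\epsilon}^{2n}$ and the vector field $\partial_{\theta}\rho_{\theta}$ provides a global horizontal lift of $\partial_{\theta}$ for the Milnor fibration $\phi=f/|f|$. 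A parallel-transport argument then shows that $\Phi_{\partial}$ is symplectically isotopic rel $\partial\tilde{F}$ to the time-$2\pi$ diffeomorphism $\rho_{2\pi}$.

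Next I would restrict $\rho_{\theta}$ to a collar neighborhood of $\partial\tilde{F}$ about the binding. A direct computation shows that $\partial_{\theta}\rho_{\theta}|_{\theta=0}$ equals the explicit Reeb field $R=\sum_{i=1}^{n}(w_{i}/d)(x_{i}\partial_{y_{i}}-y_{i}\partial_{x_{i}})$ of $\beta$ on $S^{2n-1}$ recorded in Section \ref{intro}, so $\rho_{2\pi}$ acts on this collar as the time-$2\pi$ flow of $R$, rotating each Reeb/Seifert orbit of $L(f)$ by the fraction $2\pi/d$ of a full period. When all weights agree, $L(f)$ is Boothby-Wang and this is literally the fractional fibered Dehn twist of power $d$ from Section \ref{boothby_wang}. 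When the $w_{i}$ differ, the $S^{1}$-action on $L(f)$ is only locally free and $L(f)$ is a Seifert contact fibration over an orbifold; $\rho_{2\pi}$ still rotates by $2\pi/d$ along the orbits and now induces rotations by roots of unity on the isotropy quotients at exceptional fibers, which is exactly the meaning of the word \emph{generalization} in the claim. The identification of $\rho_{2\pi}$ with the compactly supported product of $\mu(f,0)$ Dehn twists in the interior then follows from the same Liouville-flow isotopy argument used in the proof of Theorem \ref{main_theorem}, after damping the rotation profile $s(t)$ to vanish away from the collar.

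The hardest step I expect is a rigorous extension of the Chiang--Ding--van Koert fractional twist construction to this Seifert/orbifold setting: the definition in \cite{CDK2} assumes that $\partial F$ is a genuine Boothby-Wang bundle and requires an $l$-fold unramified cover $\tilde{F}\to F$ coming from an index-$l$ subgroup of $\pi_{1}(F)$. In the weighted case one must replace this by an orbifold cover compatible with the isotropy data of the Seifert fibration on $L(f)$ and verify that the deck-transformation extension of the boundary rotation through the interior of the page still produces a well-defined boundary-relative symplectomorphism across the exceptional Reeb orbits.
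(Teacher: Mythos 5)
Your core construction is the same as the paper's: identify the geometric monodromy of the Milnor fibration with the weighted rotation $\rho_{2\pi}(z)=(e^{2\pi i w_1/d}z_1,\dots,e^{2\pi i w_n/d}z_n)$ (the paper's $\tilde{\Phi}$, justified by Milnor's Lemma 9.4 rather than by your horizontal-lift argument, but this is the same content), and then correct it near $\partial\tilde{F}$ so that the result is boundary-relative. However, one step as you state it does not parse: you claim $\Phi_\partial$ is ``symplectically isotopic rel $\partial\tilde{F}$ to $\rho_{2\pi}$,'' but $\rho_{2\pi}$ is the time-$2\pi$ Reeb flow on the boundary and hence is \emph{not} the identity there, so no rel-boundary isotopy to it can exist. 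The paper resolves exactly this point explicitly: it takes the Hamiltonian $H=\sum_j \frac{w_j}{2d}|z_j|^2$, cuts it off to $\tilde{H}=(1-\rho_{2R})\cdot H$ so that its time-$(-2\pi)$ flow $\gamma$ undoes the rotation only near the boundary, and \emph{defines} $\Phi_\partial=\gamma\circ\tilde{\Phi}$, which is then genuinely the identity near $\partial\tilde{F}$. Your phrase ``damping the rotation profile $s(t)$'' gestures at this but appears in the wrong place (attached to the Dehn-twist factorization rather than to making the monodromy boundary-relative), and without the explicit cutoff Hamiltonian the claim that the composite is still a symplectomorphism is unverified.

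Your final paragraph also misidentifies where the word ``generalization'' does its work. You propose extending the Chiang--Ding--van Koert fractional twist to a Seifert/orbifold setting with orbifold covers adapted to the isotropy of the locally free action on $L(f)$; the paper does none of this and does not need to. In the paper, $\Phi_\partial=\gamma\circ\tilde{\Phi}$ fails to be a fractional fibered Dehn twist in the sense of \cite{CDK2} because its \emph{interior} part is the periodic unitary transformation $\tilde{\Phi}$ (with possibly distinct rotation angles $2\pi w_i/d$ in the different coordinates) rather than an extension of a collar rotation by deck transformations of a cover; ``generalization of a fractional fibered Dehn twist'' is essentially the name given to this composite, and the claim is established by the construction itself together with the check that it is a boundary-relative symplectomorphism inducing the correct monodromy. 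So the orbifold-cover extension you flag as the hardest step is a detour: it would be needed only to show $\Phi_\partial$ is a fractional twist in the strict covering-space sense, which is a stronger statement than the one being claimed.
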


\begin{proof} Proof is by analyzing a rotation map in the interior and boundary of the page $\tilde{F}$. We will consider $\mathbb{Z}_{d/w_i}$ action on $\tilde{F}$ for $i=1,\dots, n$. \sm
	
Since $f$ is a weighted homogeneous polynomial, we can construct a locally free action on the binding. As for the pages, we need to choose a cutoff function to redefine the hypersurface obtained from the weighted homogeneous polynomial $f$ so that the slightly deformed (by cutting off) hypersurface is Weinstein. Consider a cutoff function $\rho_R$ with the following properties.\sm
	\begin{itemize}
	\item $\rho_R(|z|)=1$ if $|z|\leq R$,
	\item $\rho_R(|z|)=0$ if $|z|>2R$,
	\item $|\rho'_R|<\frac{2}{R}$.
	\end{itemize}
	
	Given a cutoff function $\rho_R$ that only depends on $|z|$, one can smooth the filling $F=\{f(z)=0\} \cap B_{\epsilon}^{2n}$ of $L_\epsilon(f)= \{f(z)=0\} \cap S_\epsilon^{2n-1}$ by considering $f(z)=\rho_R (z)$. Note that $\tilde{F}=\{f(z)=\rho_{R}(z)\} \cap B_{\epsilon}^{2n}$ could be made symplectic by choosing the cut-off function carefully \cite[Section 6.1.2.3]{vK2}. \sm
	
	Consider the following periodic unitary transformation 
\begin{align*}
	\tilde{\Phi}: \tilde{F} &\longrightarrow \tilde{F},\\
	  (z_1, \dots, z_n) &\longmapsto (e^{2\pi i\frac{w_1}{d}}z_1, \dots, e^{2\pi i\frac{w_n}{d}}z_n).
\end{align*}	

Since $f$ is a weighted homogeneous polynomial, the periodic map $\tilde{\Phi}$ is well-defined. Notice also that it is a symplectomorphism since it is a biholomorphism.  \sm

The map $\tilde{\Phi}$ is not the identity map for large $z$. Instead, it can be considered as a generalization of a fractional fibered Dehn twist as defined in \cite{CDK2}. We will elaborate this idea further in this section. It is not (isotopic to) the identity near the boundary of the page $\tilde{F}$. So we need to undo the twist at the boundary to get the identity map and show that it is the desired monodromy. \sm

Firstly, consider the interior of the page $\tilde{F}$, call $F_\theta=\pi^{-1}(e^{i\theta})$. Then one can choose the monodromy on $F_{\theta}$ as $\tilde{\Phi}$ \cite[Lemma 9.4]{M}. In brief, Lemma 9.4 shows that $F_\theta$ given by a weighted homogeneous polynomial $f$ is diffeomorphic to a nonsingular hypersurface and one can construct the above monodromy $\tilde{\Phi}$ on the diffeomorphic copy of $F_\theta$. This completes the monodromy construction in the interior of the page $\tilde{F}$. Note that we need to ensure that this monodromy extends to the whole fibration. In what follows, we discuss this extension. \sm

We wish to observe that the monodromy in the interior of the pages extends to the boundary properly. To see this, we will use the untwisting argument given in Otto van Koert's Ph.D. thesis \cite{vK2} to determine a symplectomorphism of the whole fibration so that the monodromy looks like the identity map near the boundary. For this, let us consider the map $\Phi_t$ induced by the time $t$ flow of the Hamiltonian vector field associated to a Hamiltonian function $H$ on $\tilde{F}$ given by

\begin{equation*}
H(z_1, \dots, z_n)=\sum_j \dfrac{w_j}{2d}|z_j|^2.
\end{equation*}

The time $t$ flow of this Hamiltonian function generates a circle action given by
\begin{equation*}
Fl^{X_H}: (z_1, \dots, z_n) \mapsto (e^{ it\frac{w_1}{d}}z_1, \dots, e^{it\frac{w_n}{d}}z_n).
\end{equation*}

Let $h$ be a function given by $h(z):=(1-\rho_{2R}(|z|))$. Denote by $\tilde{\Phi}_t$, the time $t$-flow of the Hamiltonian vector field associated to $\tilde{H}=h \cdot H$. Then the time $2\pi$-flow of $\tilde{H}|_{\tilde{F}}$, a Hamiltonian on $\tilde{F}$, is the identity for $|z|<2R$, and it induces the following map:
\begin{equation*}
 (z_1, \dots, z_n) \longmapsto (e^{2\pi i\frac{w_1}{d}}z_1, \dots, e^{2\pi i\frac{w_n}{d}}z_n).
\end{equation*}
for $|z|>4R$.\sm

Thus, we can define 
\begin{align*}
\gamma: \tilde{F} &\longmapsto \tilde{F},\\
z &\longmapsto Fl^{X_{\tilde{H}}}_{-2\pi}(z) = (e^{-2\pi i\frac{w_1}{d}}z_1, \dots, e^{-2\pi i\frac{w_n}{d}}z_n).
\end{align*}

Now consider $\Phi_{\partial}= \gamma \circ \tilde{\Phi}: \tilde{F} \longrightarrow \tilde{F}$. Observe that $\Phi_{\partial}$ is not quite a fractional fibered Dehn twist due to the action given by the periodic unitary transformation $\tilde{\Phi}$. We will call it \emph{a generalization of a fractional fibered Dehn twist}. Therefore, the monodromy $\Phi_{\partial}$ is a generalization of a fractional fibered Dehn twist which is the identity at the boundary of $\tilde{F}$ and its neighborhood.
\end{proof}
Note that the  monodromy $\Phi_{\partial}$ is a fractional fibered Dehn twist when $f$ is a homogeneous polynomial as defined in \cite{CDK2}.
   
   \subsection{Proof of the Exact Number of Dehn Twists} \label{number} With Corollary \ref{main_corollary} in hand and \cite{OVK}, we show that there exists $k(k-1)^n$ right-handed Dehn twists for the fibered Dehn twist when $f$ is any homogeneous polynomial of degree $k$ with an isolated singularity at the origin.
   
   \begin{proof} Let $g$ be a homogeneous polynomial of degree $k$ with an isolated singularity at the origin given by $g(z_{0}, \dots, z_{n})=z_{0}^{k}+\dots+z_{n}^{k}$ and let $f$ be a homogeneous polynomial of degree $k$ with an isolated singularity at the origin. We want to show that there exists a 1-parameter family of homogeneous polynomials of degree $k$ with an isolated singularity at the origin, denoted by $p_{t}$ connecting $p_0$ and $p_1$ with the following properties:
\begin{itemize}
\item $p_{0}=f$.
\item $p_{1}=g$.
\item $p_{t}$ is a homogeneous polynomial of degree $k$ with an isolated singularity at the origin.
\item The level set $p_t^{-1}(0) \cap S^{2n+1}$ is smooth for all $t\in I=[0,1]$.
\end{itemize} 
\vspace{1mm}

Firstly, if we can show that the complex codimension of the singular projective variety of homogeneous polynomials of degree $k$ in the projective variety of the space of all homogeneous polynomials of degree $k$ is $\geq$ 1, then there is a path between any two polynomials ($f$ and $g$ in our case) in the nonsingular projective variety. Because one can always travel around the singular locus.\sm 

To see this, let $V_k$ denote the vector space of homogeneous polynomials of degree $k$ and $S_k$ denote the subset of $V_k$ consisting of those homogeneous polynomials of degree k containing a singular locus. Furthermore, we know that homogeneous polynomials of degree $k$ with an isolated singularity at the origin define smooth hypersurfaces in projective space and homogeneous polynomials of degree k whose zero set forms a singular variety in projective space form themselves a singular variety in the space of all homogeneous polynomials of degree $k$.\sm

Note that $S_k$, the zero set of some polynomials on the vector space $V_k$, is a subvariety of $V_k$ and it has complex codimension at least 1 (real codimension 2). Thus, its complement is path connected. Once we get a path, one can locally travel around the singular polynomials. This proves the existence of a 1-parameter family of homogeneous polynomials of degree $k$ with an isolated singularity at the origin as desired.\sm

Now consider $L(p_t):= \set{z\in \mathbb{C}^{n+1} \mid |z|^{2}=1, \, p_t(z)=0 }$, the link of singularity of the polynomial $p_t$ and check that the links $L(p_{t})$ are all contactomorphic. If we can show that they are all contactomorphic, then there must be open books with the same properties. In other words, we have the same number of ($k(k-1)^n$ many) right-handed Dehn twists for the fibered Dehn twist in both Theorem \ref{main_theorem} and Corollary \ref{main_corollary}. \sm

\begin{claim} Let $C:=\set{(t,z)\in I \times \mathbb{C}^{n+1} \mid |z|^2 = 1, \, z\in p_t^{-1}(0)}$. Then the set $C$ is a topologically trivial cobordism between $L(p_0)$ and $L(p_1)$. In particular, $L(p_0)$ and $L(p_1)$ are diffeomorphic.\sm
\end{claim}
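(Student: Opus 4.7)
The plan is to realize $C$ as the total space of a smooth proper submersion over the interval $I$ and then invoke Ehresmann's fibration theorem.

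First, I would verify that $C$ is a smooth $(2n)$-dimensional submanifold of $I\times S^{2n+1}$. Consider the smooth map
\[
F : I \times S^{2n+1} \longrightarrow \mathbb{C}, \qquad (t,z) \longmapsto p_t(z),
\]
so that $C = F^{-1}(0)$. For each fixed $t\in I$, the hypothesis built into the 1-parameter family guarantees that $p_t^{-1}(0)\cap S^{2n+1}$ is smooth, which is exactly the statement that $0\in\mathbb{C}$ is a regular value of $p_t|_{S^{2n+1}}$. In particular $dF_{(t,z)}$ is already surjective when restricted to $\{0\}\oplus T_zS^{2n+1}$, so it is a fortiori surjective on $T_{(t,z)}(I\times S^{2n+1})$. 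Therefore $C$ is a smooth submanifold of real codimension $2$, hence of real dimension $2n$.

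Next, let $\pi : C \to I$ be projection onto the first coordinate. Properness is automatic since $C \subset I \times S^{2n+1}$ is closed in the compact space $I \times S^{2n+1}$, and each fiber $\pi^{-1}(t) = \{t\}\times L(p_t)$ is compact. To show $\pi$ is a submersion at $(t,z)\in C$, I need to produce a tangent vector of the form $(1,v)\in T_{(t,z)}C$ with $v\in T_zS^{2n+1}$; the equation defining $T_{(t,z)}C = \ker dF_{(t,z)}$ reduces to
\[
\partial_t p_t(z) + d(p_t|_{S^{2n+1}})_z(v) = 0.
\]
Because $p_t|_{S^{2n+1}}$ is submersive at every point of $L(p_t)$, the linear map $d(p_t|_{S^{2n+1}})_z : T_zS^{2n+1} \to \mathbb{C}$ is surjective, so a valid $v$ exists. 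Thus $\pi$ is a proper submersion.

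Finally, Ehresmann's fibration theorem applies: a proper submersion over a connected base is a smooth fiber bundle, and since $I$ is contractible this bundle is trivial. Choosing a trivialization yields a diffeomorphism $C \cong I \times L(p_0)$ commuting with the projection to $I$, so $C$ is a topologically trivial cobordism between $L(p_0) = \pi^{-1}(0)$ and $L(p_1) = \pi^{-1}(1)$, and in particular these two links are diffeomorphic. The only technical point to keep in mind is the regularity of $F$, which reduces cleanly to the smoothness of each individual level set $p_t^{-1}(0)\cap S^{2n+1}$ guaranteed by the construction of the path $p_t$; once that is in place the argument is a standard application of Ehresmann.
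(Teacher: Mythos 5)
Your proposal is correct and follows essentially the same route as the paper: both realize $C$ as a regular level set (you work in $I\times S^{2n+1}$ with $F(t,z)=p_t(z)$, the paper in $I\times\mathbb{C}^{n+1}$ with the sphere equation added to the defining map) and then show the projection to $t$ has no critical points, concluding triviality --- your appeal to Ehresmann plays exactly the role of the paper's citation of Milnor's Theorem 3.1. The only point to make explicit is that smoothness of $p_t^{-1}(0)\cap S^{2n+1}$ as a set is not literally the same as $0$ being a regular value of $p_t|_{S^{2n+1}}$; here it holds because $p_t$ is homogeneous with isolated singularity at the origin, so $dp_t\neq 0$ on the sphere and the cone $\{p_t=0\}$ meets every sphere about the origin transversally (the paper's rank computation uses the same fact).
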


\begin{claimproof} Observe that the boundary of $C$ consists of the links $L(p_0)$ and $L(p_1)$. First we show that $C$ is a smooth manifold and hence a smooth cobordism between $L(p_0)$ and $L(p_1)$. To see this, define the map
\begin{align*}
P:I\times\mathbb{C}^{n+1} &\longrightarrow \mathbb{R} \times\mathbb{C},\\
(t, z) &\longmapsto (|z|^2-1, p_t(z)).
\end{align*}

Observe that $C=P^{-1}(0)$. To show the cobordism, we will compute the Jacobian of $P$ and conclude that $C$ is smooth. The Jacobian can be computed as follows:
\begin{equation*}
\begin{pmatrix}
\bar{z} & z \\
\frac{\partial p_t}{\partial z} & 0 \\
0 & \frac{\partial \bar{p_t}}{\partial \bar{z} }
\end{pmatrix}  
\end{equation*} 
since it is generated by $|z|^2-1, p_t, \bar{p}_t$. Then the Jacobian has rank 3 since both $\frac{\partial p_t}{\partial z}$ and $\frac{\partial \bar{p_t}}{\partial z}$ are non-vanishing away from $0 \in \mathbb{C}^{n+1}$ and also 0 is not an element of $P^{-1}(0)$ because of the $|z|^2=1$ condition.\sm

Since the Jacobian of $P$ has real rank 3 at all points in $P^{-1}(0)$, we conclude that $C$ is a smooth submanifold of $I\times\mathbb{C}^{n+1}$ of codimension 3. Hence, $C$ is a smooth cobordism between the links $L(p_0)$ and $L(p_1)$.\sm

To show that this cobordism is topologically trivial, consider the function
\begin{align*}
h: C &\longrightarrow \mathbb{R},\\
(t, z) &\longmapsto t.
\end{align*}

If we can show that $h$ has no critical points in $h^{-1}([0,1])$, we then know that $C$ is a topologically trivial cobordism. To see this, take any point $(t_0, z_0) \in C$ and take a smooth curve $c(t) = (t, z(t))$ in $C$ through $(t_0, z_0)$ with $z(t)$ smooth. One can construct such a curve as follows: Consider a curve in $\mathbb{C}^{n+1}$ given by $$y(t)=z_0+s(t).$$ 

We wish to set up an ODE for $s$ in such a way that $$p_t(z_0+s(t))=0.$$ Differentiating $p_t(z_0+s(t))=0$ will lead us to an ODE. Locally, we can solve the resulting ODE to obtain $y(t)$ satisfying $p_t(y(t))=0$. Now if we let $z(t)=\frac{y(t)}{|y(t)|}$, we get the desired smooth curve $c(t)=(t, z(t)).$\sm

Then $h \circ c$ has non-vanishing derivative at $t=t_0$ since $h\circ c(t)=t$ and $dh$ does not vanish in $(t_0, z_0)$. So no point $(t_0, z_0) \in C$ is a critical point of $h$, i.e. $h$ has no critical points. Thus $C$ is a topologically trivial cobordism. Furthermore, $L(p_0)$ and $L(p_1)$ are diffeomorphic by Theorem 3.1 of \cite{M2}. This concludes the proof of the diffeomorphism between the links $L(p_0)$ and $L(p_1)$.\sm

\end{claimproof}

Since each link $L(p_t)$ is diffeomorphic to $L(p_0)$ and has an associated contact structure $\xi_t$ induced as the link of singularity, Gray's stability theorem shows that there is a contactomorphism sending $\xi_t$ to $\xi_0$. As the resulting contact manifolds are contactomorphic, there must be open books with the same properties. \sm

As there are $k(k-1)^n$ right-handed Dehn twist for the full fibered Dehn twist in the case of Corollary \ref{main_corollary}, we have an open book with the same number of right-handed Dehn twists for the full fibered Dehn twist in the case of Theorem \ref{main_theorem}.
\end{proof}

 \section{The case $n=2$}\label{n_2}
 
In this final section, we combine Corollary \ref{main_corollary} with the monodromy computations appearing in the literature to describe mapping class group relations for some surfaces.

\subsection{The case of arbitrary $k\ge 2$}

\begin{lemma}\cite[Proposition 3.2]{HKP}
For each integer $k \ge 2$, consider the Weinstein domain
\begin{equation*}
W = \{ z_{0}^{k} + z_{1}^{k} + z_{2}^{k} = \delta \} \cap B_\epsilon^{6}
\end{equation*}
Then the page of the open book of $\partial W$ (i.e., the Milnor fiber $F_{k,k}$ associated to the polynomial $z_{0}^{k} + z_{1}^{k}=0$)  is a surface with genus $g = \frac{1}{2}(k-1)(k-2)$ and $k$ boundary components.  It is the `canonical' Seifert surface for the $(k-1,k-1)$ torus link which can be drawn in $\mathbb{R}^{3}$ as shown in Figure 7 \cite{HKP}.  The monodromy for the associated open book decomposition is isotopic to the product
\begin{equation*}
\Phi^{k} = ((D_{1,k-1}\circ\dots D_{k-1,k-1})\circ\dots\circ(D_{1,1}\circ\dots\circ D_{k-1,1}))^k
\end{equation*}
of right-handed Dehn twists $D_{i,j}$ about the curves $\alpha_{i,j} \subset F_{k,k}$ shown in the Figure \ref{n_1_surface}.
\end{lemma}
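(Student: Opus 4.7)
The plan is to derive this from Corollary \ref{main_corollary} applied with $n=2$, which already supplies the structural form of the monodromy as $\Phi^k$, where $\Phi$ is a product of $(k-1)^2$ right-handed Dehn twists along Lagrangian spheres (i.e., simple closed curves on the page). What remains is (i) to identify the abstract page $F = W\cap\{z_2=\epsilon\}$ with the concrete surface $F_{k,k}$ of genus $\tfrac12(k-1)(k-2)$ with $k$ boundary components, and (ii) to match the $(k-1)^2$ curves appearing in $\Phi$ with the grid $\alpha_{i,j}$ shown in Figure 7.

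For (i), I would observe that $F = \{z_0^k + z_1^k = \delta - \epsilon^k\}\cap B^{4}$ is, after rescaling, the Milnor fiber of the plane-curve singularity $f_0(z_0,z_1) = z_0^k + z_1^k$ at the origin. Its link parameterizes as $k$ disjoint great circles on the Clifford torus in $S^3$ (one for each $k$-th root of $-1$), so $\partial F$ consists of $k$ components. The weighted Milnor number formula gives $\mu(f_0,0) = (k-1)^2$, hence $\chi(F) = 1-(k-1)^2$, and solving $2 - 2g - k = 1 - (k-1)^2$ yields $g = \tfrac12(k-1)(k-2)$. The identification of $F$ with the canonical Seifert surface built from $k$ parallel disks joined by bands follows from the standard braided description of the corresponding torus link.

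For (ii), I would exploit the $\mathbb{Z}/k$ symmetry $(z_0, z_1, z_2) \mapsto (z_0, z_1, \zeta_k z_2)$ of the Lefschetz fibration $z_2: W\to\mathbb{C}$. Before Morsifying, $z_2$ has exactly $k$ critical points, at $(0,0,\zeta_k^j\delta^{1/k})$, each locally modeled on $z_0^k + z_1^k$, and these are permuted cyclically by the symmetry. Choosing a Morsification compatible with this action splits each critical point into $(k-1)^2$ Morse critical points, producing $k$ identical clusters of critical values permuted by $\mathbb{Z}/k$. The symmetry then forces the boundary monodromy to factor as $\Phi^k$, where $\Phi$ is the product of the $(k-1)^2$ Dehn twists along the vanishing cycles of a single cluster, namely the vanishing cycles of a Morsification of $z_0^k + z_1^k$.

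The explicit identification of these vanishing cycles with the grid $\alpha_{i,j}$ then follows from an A'Campo-style divide computation, or equivalently from viewing $F_{k,k}$ as the $k$-fold cyclic branched cover of the disk branched at $k$ points and tracing the images of the real locus of the Morsification. The main technical obstacle is fixing a distinguished basis of vanishing paths so that the Dehn twists appear in the precise order written in the statement, which amounts to a Hurwitz-move bookkeeping computation on the $k(k-1)^2$ critical values rather than to conceptual work; alternatively, one can invoke \cite[Proposition 3.2]{HKP} directly, whose monodromy formula matches $\Phi^k$ up to Hurwitz equivalence.
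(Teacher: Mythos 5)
Your proposal is correct and, at the decisive step, does exactly what the paper does: it defers the identification of the vanishing cycles with the specific curves $\alpha_{i,j}$ of Figure \ref{n_1_surface} to \cite[Proposition 3.2]{HKP}, which is also the paper's entire justification for the monodromy formula. The remaining differences are cosmetic rather than structural: you obtain the genus and boundary count from the Milnor number $\mu(z_0^k+z_1^k,0)=(k-1)^2$ and the Euler characteristic instead of the degree-genus formula, and you re-derive the $\Phi^k$ periodicity from the $\mathbb{Z}/k$ symmetry of the fibration $z_2\colon W\to\mathbb{C}$, which is precisely the argument the paper already gives in its proof of Corollary \ref{main_corollary}.
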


\begin{figure}[h]
\begin{overpic}[scale=.7]{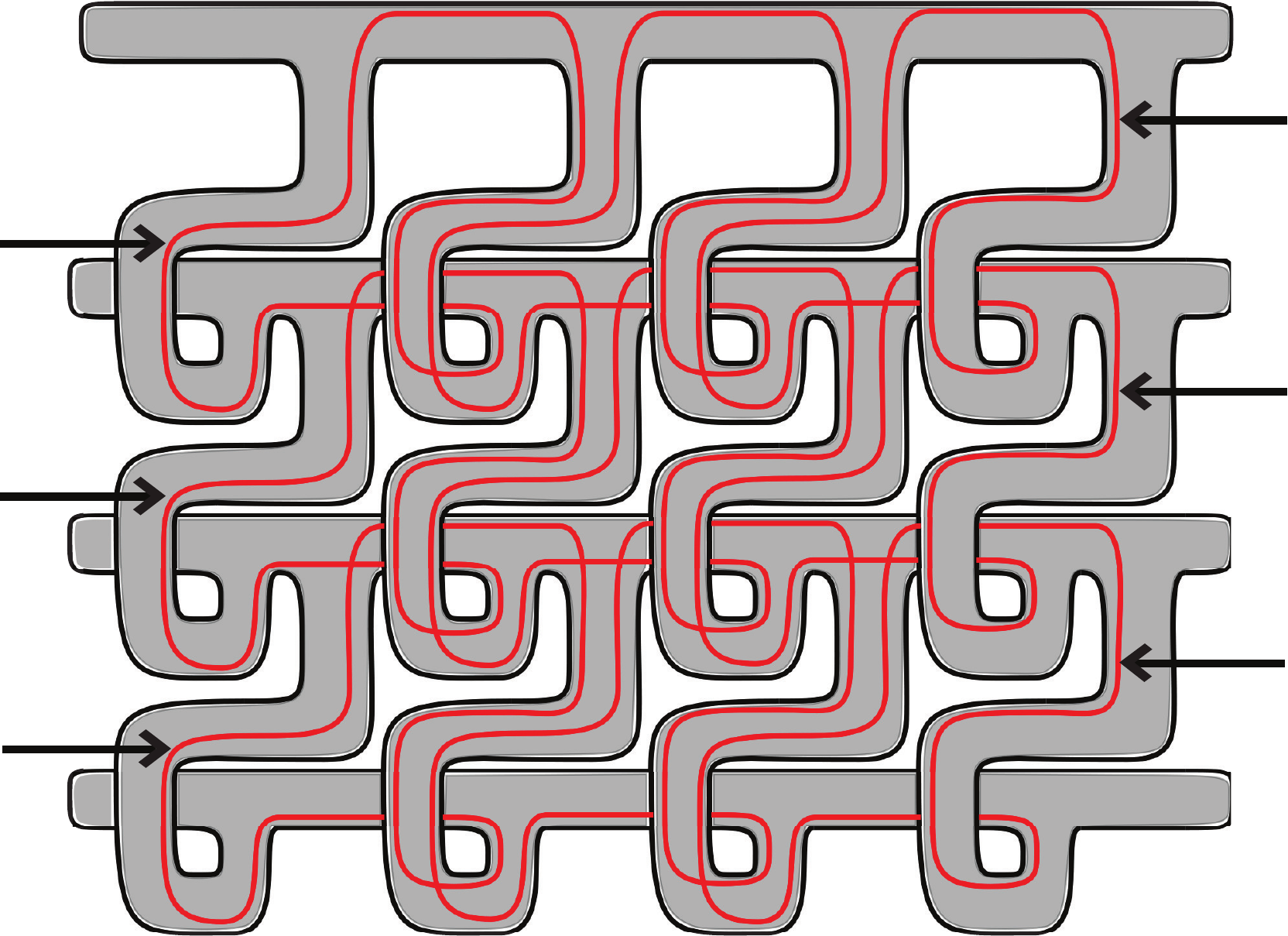}
    \put(-6,53){$\alpha_{1,1}$}
    \put(-6,33.5){$\alpha_{2,1}$}
    \put(-4,29){.}
    \put(-4,26.5){.}
    \put(-4,24){.}
    \put(-4,21.5){.}
    \put(-4,19){.}
    \put(-8.5,14){$\alpha_{k-1,1}$}
    \put(101,62.5){$\alpha_{1,k-1}$}
    \put(101,41.5){$\alpha_{2,k-1}$}
    \put(103,37){.}
    \put(103,34.5){.}
    \put(103,32){.}
    \put(103,29.5){.}
    \put(103,27){.}
    \put(101,20.5){$\alpha_{k-1,k-1}$}
    \vspace{5mm}
\end{overpic}
	\caption{The curves $\alpha_{i,j}$ embedded in the surface $F_{k,k}$.  This figure is essentially the same as Figure 7 in \cite{HKP}.}
    \label{n_1_surface}
\end{figure}

\begin{proof}
The computation of the genus and number of boundary components of $F_{k,k}$ follows immediately from the degree-genus formula for complex curves in $\mathbb{CP}^{2}$.  The drawing of the surface in $\mathbb{R}^{3}$ follows from Example 6.3.10 in \cite{GS}.  The Dehn twist presentation of the monodromy follows from the proof of Proposition 3.2 in \cite{HKP}.\sm
\end{proof}

Combining the above lemma with Corollary \ref{main_corollary}, provides the following:

\begin{corollary}
The surface $F_{k,k}$ described in Figure 7 \cite{HKP} is such that
\begin{equation}
\big{(} (D_{1,k-1}\circ\dots D_{k-1,k-1})\circ\dots\circ(D_{1,1}\circ\dots\circ D_{k-1,1})\big{)}^{k}
\end{equation}
is isotopic to a product of right-handed fibered Dehn twists about each of the boundary components of $F_{k,k}$.\sm
\end{corollary}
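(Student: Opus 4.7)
The plan is to combine the preceding lemma (which computes the Lefschetz-fibration monodromy of $\partial W$ in the case $n=2$ explicitly as the product $\Phi^{k}$ of $k(k-1)^{2}$ Dehn twists about the curves $\alpha_{i,j}$) with Corollary \ref{main_corollary} (which identifies $\Phi^{k}$, up to boundary-relative symplectic isotopy, with the right-handed fibered Dehn twist $\Phi_{\partial}$ along the boundary). The only additional content is the interpretation of the abstract boundary fibered Dehn twist in the purely $2$-dimensional setting of the surface $F_{k,k}$.

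First I would invoke the lemma to identify the page of the coherent open book of $\partial W$ with $F_{k,k}$ and the monodromy with the product
\[
\Phi^{k} = \big((D_{1,k-1}\circ\dots\circ D_{k-1,k-1})\circ\dots\circ (D_{1,1}\circ\dots\circ D_{k-1,1})\big)^{k}.
\]
Next, since $f = z_{0}^{k}+z_{1}^{k}+z_{2}^{k}$ is of the form treated by Corollary \ref{main_corollary} with $n=2$, that corollary applies directly: the fibered Dehn twist $\Phi_{\partial} \in \mathrm{Symp}(F_{k,k}, d\beta, \partial F_{k,k})$ is boundary-relative symplectically isotopic to $\Phi^{k}$. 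The count $(k-1)^{2}$ of Dehn twists constituting $\Phi$ matches the number of curves $\alpha_{i,j}$, $1 \le i,j \le k-1$, confirming that both descriptions refer to the same symplectomorphism up to isotopy.

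The remaining step is to unpack what a fibered Dehn twist along $\partial F_{k,k}$ means in dimension two. The boundary $\partial F_{k,k}$ is a disjoint union of $k$ circles, and on each collar $S^{1}\times[0,1]$ the defining formula of the fibered Dehn twist from Section \ref{boothby_wang},
\[
(z,t)\longmapsto (z\cdot[s(t)\bmod 2\pi],\,t),
\]
with $s(0)=2\pi$ and $s(1)=0$, is precisely the standard model of a right-handed Dehn twist about the core circle of the annulus. Hence the fibered Dehn twist on $F_{k,k}$ decomposes as a product of $k$ commuting right-handed Dehn twists, one parallel to each boundary circle. Combining this with the preceding paragraph yields the claimed isotopy.

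I do not expect a serious obstacle here: the work is really done by the lemma and by Corollary \ref{main_corollary}, and the only genuinely new observation is the last one, namely the identification of the $n=1$ (surface) fibered Dehn twist with a product of boundary-parallel Dehn twists. The one item to double-check is bookkeeping: the orientations of the $S^{1}$-action on $\partial F_{k,k}$ match the Reeb flow on the binding of the Milnor open book, so that the resulting boundary Dehn twists are indeed right-handed rather than left-handed; this is immediate from the orientation conventions set up in Section \ref{boothby_wang}.
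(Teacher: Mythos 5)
Your proposal is correct and follows essentially the same route as the paper: the paper likewise obtains the corollary by combining the preceding lemma with Corollary \ref{main_corollary}, and then remarks that in dimension two the fibered Dehn twist along the (disconnected) boundary is precisely a product of right-handed Dehn twists about curves parallel to the $k$ boundary components of $F_{k,k}$. Your unpacking of the collar model $(z,t)\mapsto(z\cdot[s(t)\bmod 2\pi],t)$ as the standard annulus Dehn twist is exactly the observation the paper relies on.
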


Note that in dimension 2, fibered Dehn twists are the same as symplectic Dehn twists so that product of fibered Dehn twists is isotopic to a product of right-handed Dehn twists about curves parallel to the boundary components of $F_{k, k}$. \sm

\subsection{The case $k=3$} \label{Corollary3.1}

Now we describe the case $n=2$, $k=3$ in greater detail, proving Corollary \ref{mapping_class_relationn}. For simplicity, we will henceforth refer to $F_{3,3}$ as $S$.\sm

To complete the proof,  we consider arcs $a_{g}, a_{b}, a_{p}$, and $a_{r}$ in $S$ as depicted in the Figure \ref{cutting_n_1_k_3}.  We also label the boundary components $b_{1},b_{2}$, and $b_{3}$ of $S$ as shown. The orientations of $a$-arcs are indicated with arrows.\sm

\begin{figure}[h]
\begin{overpic}[scale=.6,tics=10]{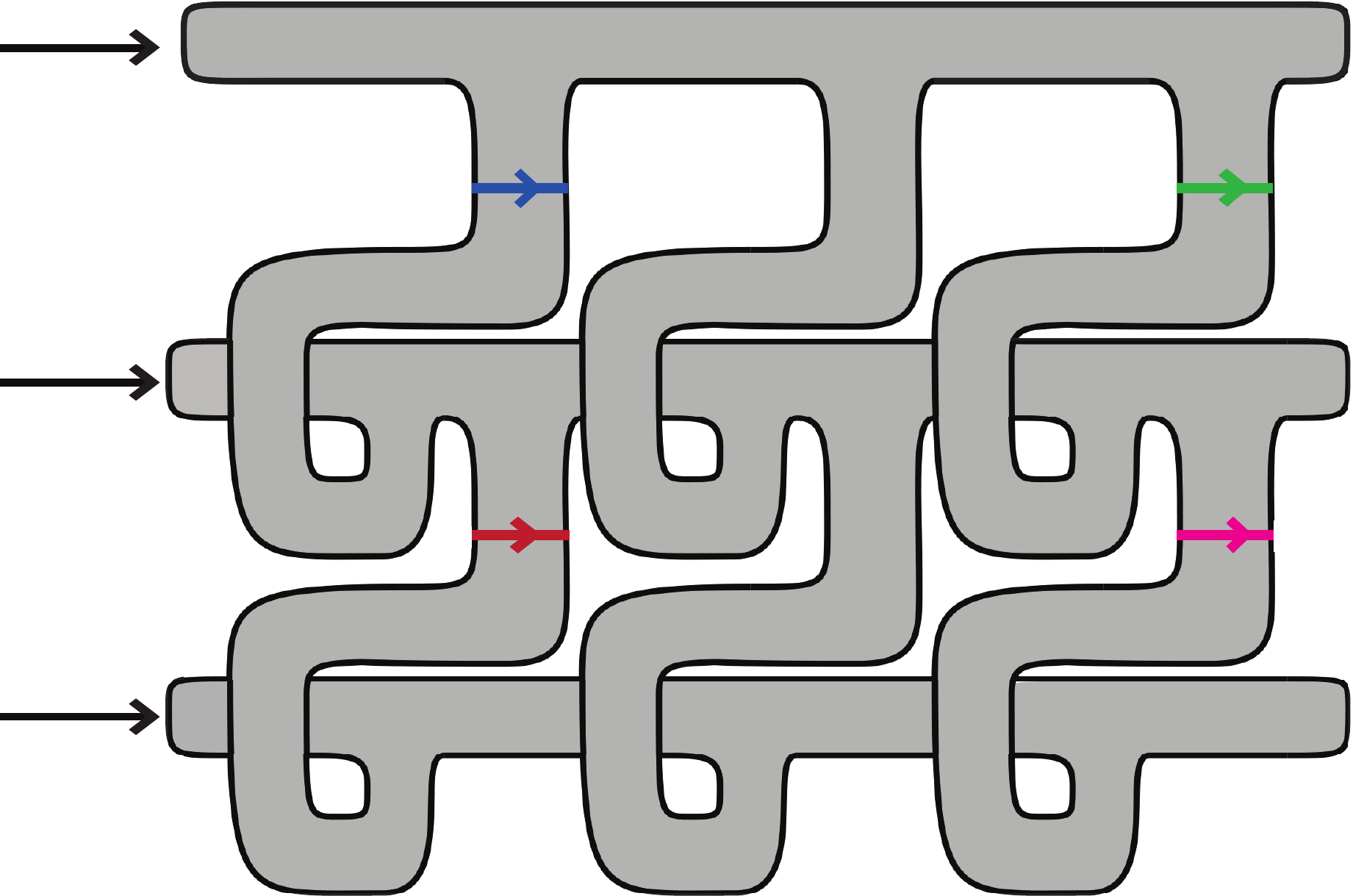}
    \put(-5,62){$b_{1}$}
    \put(-5,37){$b_{2}$}
    \put(-5,12){$b_{3}$}
    \put(37,54.5){$a_{b}$}
    \put(89,54.5){$a_{g}$}
    \put(89,29){$a_{p}$}
    \put(37,29){$a_{r}$}

    \vspace{5mm}
\end{overpic}
	\caption{We label the green, blue, purple, and red arcs as $a_{g}$, $a_{b}$, $a_{p}$, and $a_{r}$, respectively.}
    \label{cutting_n_1_k_3}
\end{figure}

If we cut the surface  $S$ along the arcs $a_{g}, a_{b}, a_{p}$, and $a_{r}$, all that will remain is a 16-gon,  which is topologically just a disk as shown in the Figure \ref{16-gon}. In the Figure \ref{16-gon}, the segments of the boundary are colored according to the identifying colors for the $a$-arcs. The small arrows indicate where the boundary orientation of the 16-gon agrees and disagrees with the orientations of the arcs.\sm

\begin{figure}[h]
\begin{overpic}[scale=1.1,tics=10]{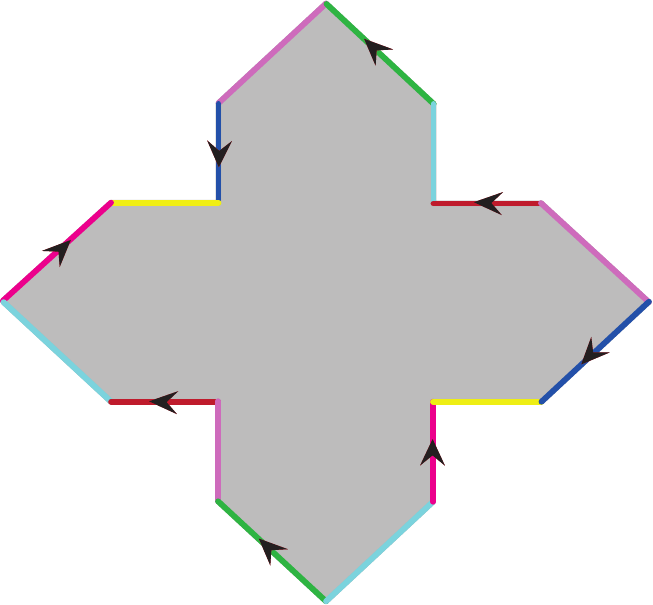}
\vspace{5mm}
\put(23,70){$a_b$}
\put(89,35){$a_b$}
\put(58,85){$a_g$}
\put(32,5){$a_g$}
\put(18,26){$a_r$}
\put(70,64){$a_r$}
\put(0,56){$a_p$}
\put(66,21){$a_p$}
\put(65,70){$b_3$}
\put(88.5,55){$b_1$}
\put(70,25){$b_2$}
\put(58,5){$b_3$}
\put(25,21){$b_1$}
\put(1,35){$b_3$}
\put(18,64){$b_2$}
\put(31,85){$b_1$}

\end{overpic}
    \caption{The disk obtained by taking the closure of $S\setminus(a_{g}\cup a_{b} \cup a_{p} \cup a_{r})$.}
    \vspace{5mm}
    \label{16-gon}
\end{figure}

By regluing along the cut arcs, we obtain an easier-to-visualize depiction of $S$ shown in the Figure \ref{cutting_n_1_k_3}.  Call the diffeomorphism from the surface in Figure \ref{n_1_surface} to the glued up surface in Figure \ref{glued_up_surface}, induced by the cutting and gluing of the $a$-arcs, by $\Psi$. \sm

\begin{figure}[h]
\vspace{5mm}
\begin{overpic}[scale=.7,tics=10]{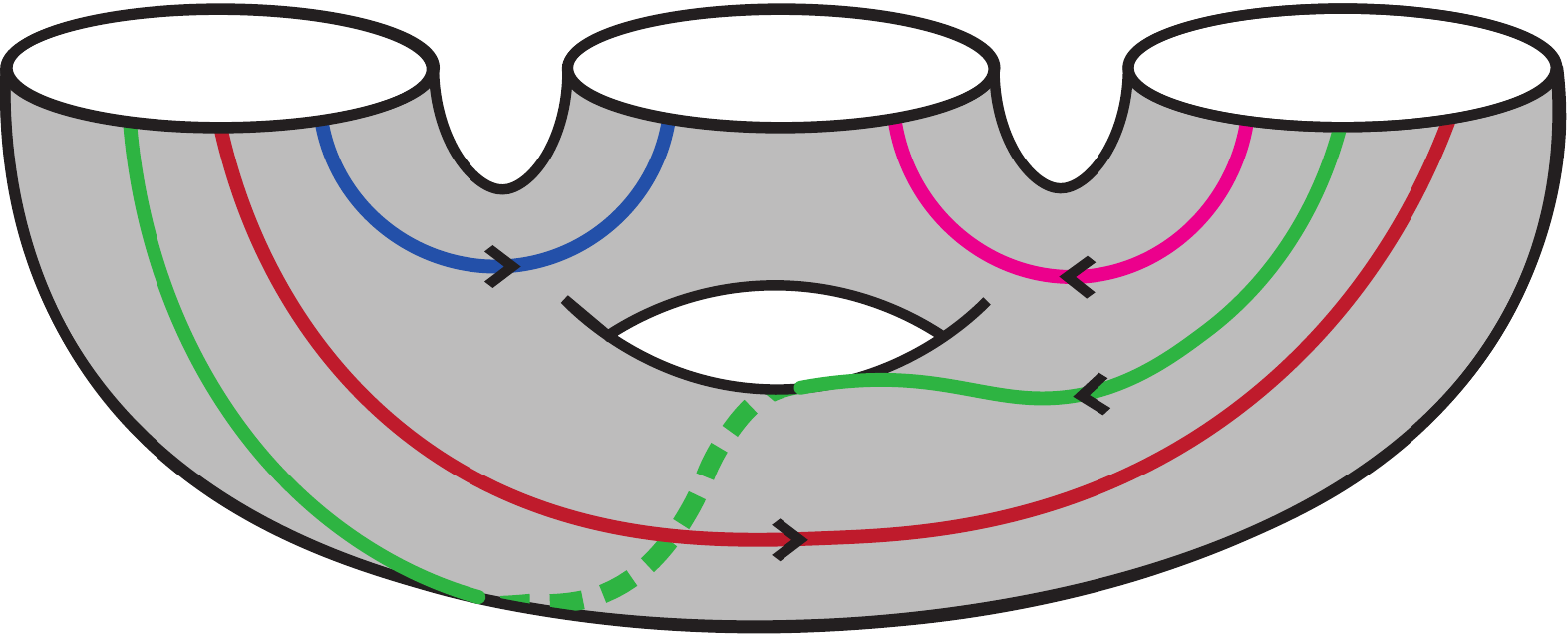}
\put(30,25.5){$a_b$}
\put(67,25){$a_p$}
\put(9,15){$a_g$}
\put(18,15){$a_r$}
\put(12,41.5){$b_1$}
\put(49,41.5){$b_2$}
\put(85,41.5){$b_3$}

\end{overpic}
    \caption{A surface equipped with embedded arcs equivalent to Figure \ref{cutting_n_1_k_3} by the diffeomorphism $\Psi$.}
    \label{glued_up_surface}
\end{figure}

To complete the proof of Corollary \ref{mapping_class_relationn}, one should check that $\Psi$ maps the curves
\begin{equation*}
\alpha_{1,1},\alpha_{1,2},\alpha_{2,1},\alpha_{2,2}
\end{equation*}\sm
of the Figure \ref{n_1_surface} to the curves
\begin{equation*}
\alpha_{b},\alpha_{g},\alpha_{r},\alpha_{p}
\end{equation*}
of the Figure \ref{mapping_class_relation}, respectively. To see this, note that for each color-label $i\in\{g,b,p,r\}$, the simple-closed curve $\alpha_{i}$ intersects $a_{i}$ in a single point, and is disjoint from each $a_{j}$ for $j\ne i$.  These intersection conditions completely determine the isotopy classes of the $\alpha$-curves. This completes the proof of Corollary \ref{mapping_class_relationn}. \sm

\subsection{From Monodromy to Mapping Class Group Relations} \label{MCG} In this section, we study explicit descriptions of the symplectomorphisms in the case $n=2$, which provides a visual representation of the case $n=2$, $k=3$ recovering the classical chain relation for a torus with two boundary components.

\begin{definition} Consider the arcs $\alpha_1, \alpha_2, \alpha_3, \alpha_4, b_1, b_2,$ and $b_3$ in $S_{1, 3}$ as depicted in the Figure \ref{star}. If $\alpha_1, \alpha_2, \alpha_3, \alpha_4, b_1, b_2,$ and $b_3$ are the isotopy classes of simple closed curves in $S_{1, 3}$ then we have the following relation: 

\begin{equation*}
(D_{\alpha_1} \circ D_{\alpha_2} \circ D_{\alpha_3} \circ D_{\alpha_4})^{3} = D_{b_1} \circ D_{b_2} \circ D_{b_3}.
\end{equation*}

\begin{figure}[h]
\begin{overpic}[scale=.7,tics=5]{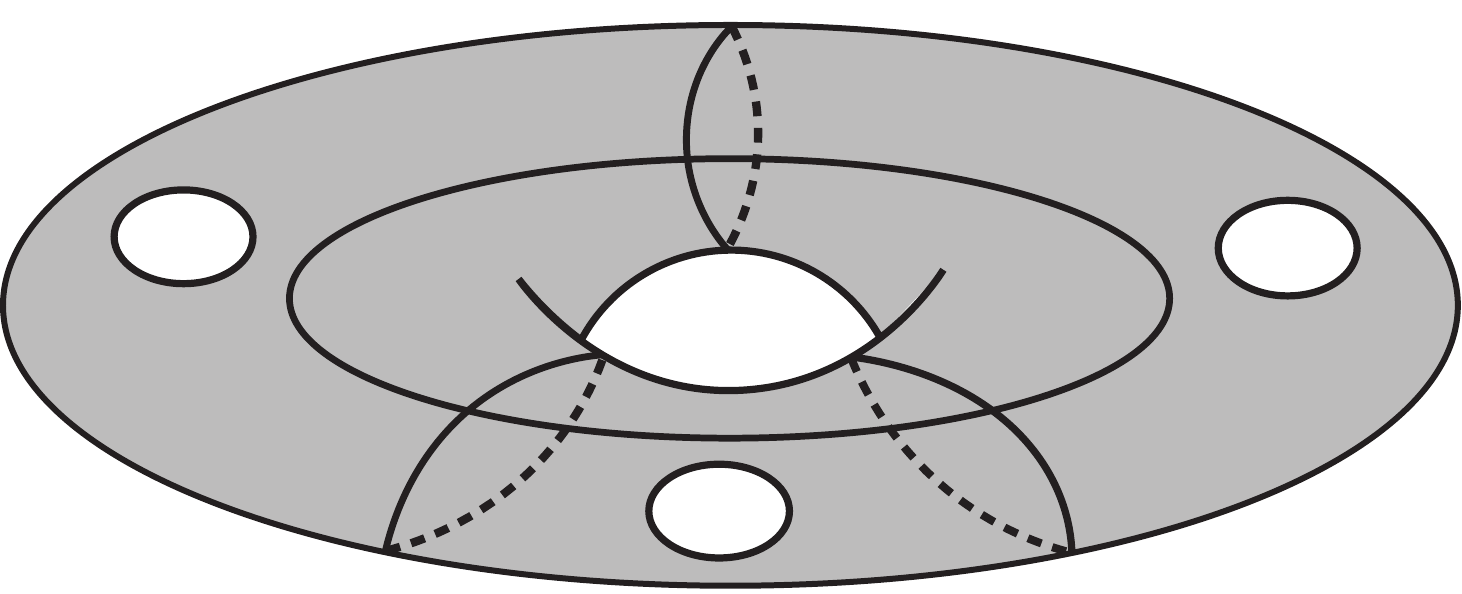}
\vspace{5mm}
\put(90.5,19){$b_1$}
\put(5,19){$b_2$}
\put(52.5,2){$b_3$}
\put(21,7){$\alpha_1$}
\put(71,7){$\alpha_2$}
\put(40,35){$\alpha_3$}
\put(70,28){$\alpha$}
\end{overpic}
    \caption{ We label the boundary components as $b_1, b_2, b_3$ and other components of the isotopy classes of simple closed curves in $S_{1,3}$ as $\alpha_1, \alpha_2, \alpha_3,$ and $\alpha_4$.}
    \vspace{5mm}
    \label{star}
\end{figure}
\vspace{.3cm}
\noindent This relation is called the \emph{star relation} \cite{SG}.\sm
\end{definition}

Let $S$ be a compact connected, orientable surface of genus $g \geq 1$ with nonempty boundary and let $\alpha_1, \dots, \alpha_k$ be simple closed curves on $S$. If the intersection number $i(\alpha_j, \alpha_{j+1})=1$ for all $1 \leq j \leq k-1$ and $i(\alpha_j, \alpha_t)=0$ for $|j-t|>1$, then the sequence of isotopy classes of curves $\alpha_1, \dots, \alpha_k$ in $S$ is called a \textit{chain}. \sm

\begin{proposition}\cite[Proposition 4.12]{FM}\label{FMProp} Consider a chain of circles $\alpha_1, \dots, \alpha_k$ in $S$. Denote isotopy classes of boundary curves by $b$ in the even case and by $b_1$ and $b_2$ in the odd case. Then each chain induces one of the following relations in the mapping class group of $S$:
\begin{align*}
(D_{\alpha_1} \circ \dots \circ D_{\alpha_m})^{2m+2} &= D_b \hspace{1cm} \text{if $m$ is even},\\
(D_{\alpha_1} \circ \dots \circ D_{\alpha_m})^{m+1} &= D_{b_1} \circ D_{b_2} \hspace{.3cm} \text{if $m$ is odd}.
\end{align*}
\end{proposition}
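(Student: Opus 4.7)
The plan is to prove the chain relation by reducing it to a canonical subsurface and then exploiting a branched-cover picture. As a first step, I would observe that each Dehn twist $D_{\alpha_i}$ is supported in an arbitrarily small regular neighborhood of $\alpha_i$, so the identity needs only to be verified inside the tubular neighborhood $N_m$ of $\alpha_1 \cup \dots \cup \alpha_m$. A quick inclusion-exclusion calculation gives $\chi(N_m) = 1 - m$; combined with counting boundary components (one if each end of the chain has a single intersection on the adjacent circle, two if the two ends split), this yields $N_m \cong \Sigma_{m/2,\, 1}$ when $m$ is even and $N_m \cong \Sigma_{(m-1)/2,\, 2}$ when $m$ is odd. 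From here on, all assertions are made inside $\mathrm{MCG}(N_m)$, and extending by the identity across $S \setminus N_m$ delivers the relation in $\mathrm{MCG}(S)$.

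Next, I would dispose of the small cases directly. For $m = 1$, $N_1$ is an annulus and both $b_1$ and $b_2$ are isotopic to the core $\alpha_1$, so $D_{\alpha_1}^{\,2} = D_{b_1} D_{b_2}$ is immediate. For $m = 2$, $N_2$ is a once-holed torus and the relation $(D_{\alpha_1} D_{\alpha_2})^{6} = D_b$ can be verified either by computing the action of $D_{\alpha_1} D_{\alpha_2}$ on an arc basis and applying the Alexander method, or by recognizing that $D_{\alpha_1} D_{\alpha_2}$ represents an element of order $6$ in $\mathrm{MCG}(\Sigma_{1,1}) / \langle D_b \rangle \cong \mathrm{SL}_2(\mathbb{Z})$.

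For general $m$, I would appeal to the Birman--Hilden correspondence. Each $N_m$ carries a hyperelliptic involution $\iota$ whose quotient is a disk $D$ with $m+1$ marked points, where each $\alpha_i$ descends to a simple arc $a_i$ joining the $i$th and $(i+1)$st marked points. Each $D_{\alpha_i}$ is the $\iota$-equivariant lift of the half-twist $\sigma_i$ along $a_i$, and this identification realizes the subgroup of $\mathrm{MCG}(N_m)$ generated by the $D_{\alpha_i}$ as a (central extension of a) quotient of the braid group $B_{m+1}$. The center of $B_{m+1}$ is generated by the full twist $\Delta^{2} = (\sigma_1 \cdots \sigma_m)^{m+1}$, which is the Dehn twist of $D$ about a curve parallel to $\partial D$ enclosing all the marked points. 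Lifting this twist through the branched cover yields a boundary Dehn twist on $N_m$, and reading off the exponent will give the stated relation.

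The main obstacle is the exponent bookkeeping in this lift. When $m$ is odd so that $m+1$ is even, the curve parallel to $\partial D$ lifts to two disjoint circles isotopic to $b_1$ and $b_2$ in $N_m$ with the restriction of the cover trivial, and a direct unwinding shows that $\Delta^{2}$ pulls back to $D_{b_1} D_{b_2}$, yielding $(D_{\alpha_1} \cdots D_{\alpha_m})^{m+1} = D_{b_1} D_{b_2}$. When $m$ is even so that $m+1$ is odd, that same curve is covered by a single circle $b$ via the connected double cover, and the lift of $\Delta^{2}$ is not a Dehn twist but rather a ``half-twist'' composed with the involution $\iota$; squaring to $\Delta^{4} = (\sigma_1 \cdots \sigma_m)^{2m+2}$ cancels the $\iota$ factor and produces $D_b$, giving the exponent $2m+2$. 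Managing this parity issue carefully, and checking that the lift is truly the identity outside the annular collar of $\partial N_m$, is the delicate step.
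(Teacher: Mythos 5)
The paper does not prove this statement; it is quoted verbatim from Farb--Margalit \cite[Proposition 4.12]{FM}, so there is no in-paper argument to compare against. Your proposal is, in outline, exactly the standard proof given in \cite[Section 9.4]{FM}: restrict to the regular neighborhood $N_m$ (your Euler characteristic and boundary count are correct), realize $N_m$ as the double cover of a disk branched over $m+1$ points with the $D_{\alpha_i}$ lifting the half-twists $\sigma_i$, and push the central element $\Delta^2=(\sigma_1\cdots\sigma_m)^{m+1}$, which is the boundary Dehn twist of the disk, through the cover; the parity of $m+1$ governs whether $\partial D$ lifts to two circles (giving $D_{b_1}\circ D_{b_2}$) or to a connected double cover of one circle (so that only $\Delta^4$ lifts to $D_b$). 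Two sketch-level points deserve attention if you write this up: (i) in the $m=2$ warm-up, the order-$6$ computation in $\mathrm{MCG}(\Sigma_{1,1})/\langle D_b\rangle$ only shows $(D_{\alpha_1}D_{\alpha_2})^6$ is \emph{some} power of $D_b$, so you still need the Alexander method (or the covering argument) to see the power is exactly one; and (ii) for the even case you must verify that the lift of $\Delta^2$ chosen to fix $\partial N_m$ pointwise is the one whose square is $D_b$ with the correct framing, which is precisely the ``delicate step'' you flag. With those details supplied, the argument is correct.
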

In each case, the relation is called a \emph{chain relation} or a \emph{k-chain relation}. There is another version of the chain relation described in \cite{FM} which will be useful in our case:
\begin{align*}
(D_{\alpha_1}^2 \circ D_{\alpha_2} \circ \dots \circ D_{\alpha_m})^{2m} &= D_b \hspace{.9cm} \text{if $m$ is even},\\
 (D_{\alpha_1}^2 \circ D_{\alpha_2} \circ \dots \circ D_{\alpha_m})^{m} &= D_{b_1} \circ D_{b_2} \hspace{.3cm} \text{if $m$ is odd}.
\end{align*}

Notice how the Dehn twist along the boundary differs in each case. Now consider any two arcs, $\alpha$ and $\beta$ on S with geometric intersection number 1 (i.e., the minimum number of intersections of curves $\alpha^{'}$ and $\beta^{'}$ isotopic to $\alpha$ and $\beta$, respectively). Then the following relation is called the \emph{braid relation}:
\begin{equation*}
D_{\alpha} \circ D_{\beta} \circ D_{\alpha} = D_{\beta} \circ D_{\alpha} \circ D_{\beta}.
\end{equation*}\sm
\indent It is known that the star relation implies the 2-chain relation $(D_{\alpha_1}D_{\alpha_2})^6 = D_b$  by using the braid relation. If one can show that the relation
\begin{equation*}
(D_{\alpha_{r}}\circ D_{\alpha_{p}} \circ D_{\alpha_{b}} \circ D_{\alpha_{g}} )^{3} = D_{b_{1}}\circ D_{b_{2}} \circ D_{b_{3}}.
\end{equation*}\sm
in the mapping class group of $S_{1,3}$ given in Corollary \ref{mapping_class_relationn} is isotopic to the star relation, then it satisfies the 2-chain relation. To see this, we will use the following argument explained by Dan Margalit \cite{DM}.\sm

Consider the yellow curve $\alpha_y$  as depicted in the Figure \ref{equivalence} 
which intersects $\alpha_g$ once and is disjoint from $\alpha_b$ and $\alpha_p$. Then we have:
\begin{equation*}
D_{\alpha_r} = D_{\alpha_g}^{-1} \circ D_{\alpha_y} \circ D_{\alpha_g}.
\end{equation*}

\begin{figure}[h]
\vspace{5mm}
\begin{overpic}[scale=.7,tics=10]{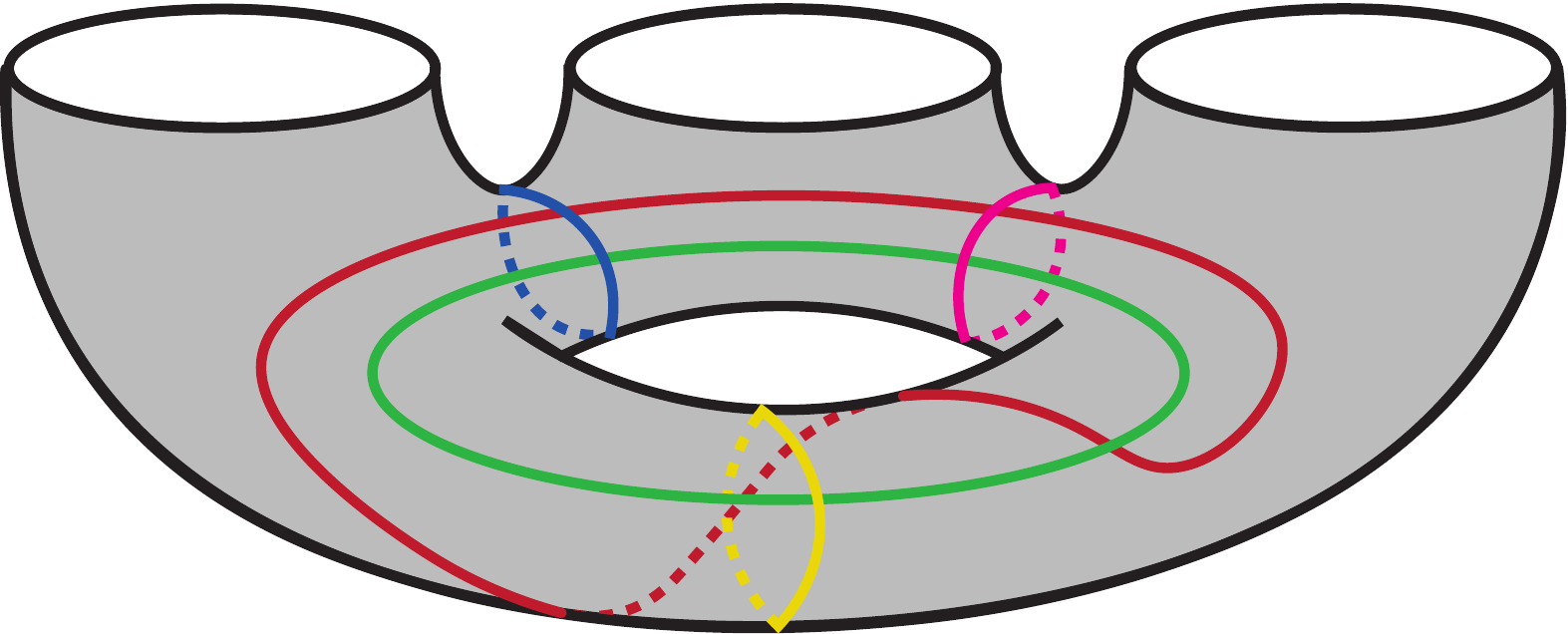}
\put(35.5,28.5){$\alpha_b$}
\put(61,29){$\alpha_p$}
\put(35,7){$\alpha_g$}
\put(13,14){$\alpha_r$}
\put(52,2.5){$\alpha_y$}
\put(12,41.5){$b_1$}
\put(49,41.5){$b_2$}
\put(85,41.5){$b_3$}
\end{overpic}
	\caption{We label the blue, green, purple, red, and yellow curves as $\alpha_{b}$, $\alpha_{g}$, $\alpha_{p}$, $\alpha_{r}$, and $\alpha_y$, respectively.  The boundary components are labeled as $b_{1}$, $b_{2}$, and $b_{3}$.}
    \label{equivalence}
\end{figure}

After plugging $D_{\alpha_r} = D_{\alpha_g}^{-1} \circ D_{\alpha_y} \circ D_{\alpha_g}$ into the relation given in Corollary \ref{mapping_class_relationn}, we get the following relation:
\begin{equation*}
((D_{\alpha_g}^{-1} \circ D_{\alpha_y} \circ D_{\alpha_g}) \circ D_{\alpha_p} \circ D_{\alpha_b} \circ D_{\alpha_g})^3 = D_{b_{1}}\circ D_{b_{2}} \circ D_{b_{3}}.
\end{equation*}

After multiplying out $D_{\alpha_g}^{-1} \circ D_{\alpha_y} \circ D_{\alpha_g} \circ D_{\alpha_p} \circ D_{\alpha_b} \circ D_{\alpha_g}$ with itself three times and canceling out the terms, we obtain:
\begin{equation*}
D_{\alpha_g}^{-1} \circ D_{\alpha_y} \circ D_{\alpha_g} \circ D_{\alpha_p} \circ D_{\alpha_b} \circ D_{\alpha_y} \circ D_{\alpha_g} \circ D_{\alpha_p} \circ D_{\alpha_b} \circ D_{\alpha_y} \circ D_{\alpha_g} \circ D_{\alpha_p} \circ D_{\alpha_b} \circ D_{\alpha_g}= D_{b_{1}}\circ D_{b_{2}} \circ D_{b_{3}}.
\end{equation*}

Using the fact that $D_{b_{1}}\circ D_{b_{2}} \circ D_{b_{3}}$ is central, we can conjugate both sides by $D_{\alpha_{g}}$ and get:
\begin{equation*}
D_{\alpha_y} \circ D_{\alpha_g} \circ D_{\alpha_p} \circ D_{\alpha_b} \circ D_{\alpha_y} \circ D_{\alpha_g} \circ D_{\alpha_p} \circ D_{\alpha_b} \circ D_{\alpha_y} \circ D_{\alpha_g} \circ D_{\alpha_p} \circ D_{\alpha_b} = D_{b_{1}}\circ D_{b_{2}} \circ D_{b_{3}}.\sm
\end{equation*}

We can also apply conjugation by  $D_{\alpha_{y}}$. After applying conjugation, we have:
\begin{equation*}
D_{\alpha_g} \circ D_{\alpha_p} \circ D_{\alpha_b} \circ D_{\alpha_y} \circ D_{\alpha_g} \circ D_{\alpha_p} \circ D_{\alpha_b} \circ D_{\alpha_y} \circ D_{\alpha_g} \circ D_{\alpha_p} \circ D_{\alpha_b} \circ D_{\alpha_y} = D_{b_{1}}\circ D_{b_{2}} \circ D_{b_{3}},
\end{equation*}
\begin{equation*}
(D_{\alpha_g} \circ D_{\alpha_p} \circ D_{\alpha_b} \circ D_{\alpha_y})^3 = D_{b_{1}}\circ D_{b_{2}} \circ D_{b_{3}}
\end{equation*}
which is the star relation. \sm

Hence, $n=2$ and $k=3$ case of Corollary \ref{main_corollary} recovers the star relation. The star relation and the braid relation imply the 2-chain relation as follows. \sm

If you cap off one boundary component, then two curves in the star relation become the same. Then it is an application of the braid relation to get the 2-chain relation $$(D_{\alpha_1}^{2}\circ D_{\alpha_2} \circ D_{\alpha_3})^3 = D_{b_{1}} \circ D_{b_{2}}.$$ 

Therefore, $n=2$ and $k=3$ case of Corollary \ref{main_corollary} recovers the classical chain relation for a torus with 2 boundary components.

\bibliographystyle{amsalpha}
\bibliography{paperbib}
\end{document}